\def\Bbb{\mathbb}
\def\bcp{\mathbb C \mathbb P}
\def\CC{\mathbb C}
\def\eea{\end{eqnarray*}}
\newtheorem{main}{Theorem}
\newtheorem{defn}{Definition}
\newtheorem{thm}{Theorem}
\newtheorem{prop}[thm]{Proposition}
\newtheorem{cor}[thm]{Corollary}
\newtheorem{lem}[thm]{Lemma}
\def\ro{\stackrel{\circ}{r}}
\def\wt{\widetilde}
\newenvironment{rmk}{\mbox{ }\\{\bf  Remark}\mbox{ }}{
\hfill $\Box$\mbox{}\bigskip}
\def\ZZ{{\mathbb Z}}
\def\RR{{\mathbb R}}
\begin{document}
\def\NN{\mathbb N}
\def\ZZ{\mathbb Z}
\def\QQ{\mathbb Q}
\def\RR{\mathbb R}
\def\CC{\mathbb C}
\def\SS{\mathbb S}
\def\PP{\mathbb P}
\def\VV{\mathbb V}
\def\ro{\mathring{r}}
\def\ve{\varepsilon}
\def\bcp{\mathbb C \mathbb P}
\def\cpb{\overline{\mathbb C \mathbb P^2}}
\def\KK{\cal K}
\def\EE{\cal E}
\def\LL{\cal L}
\def\OO{{\cal O}} 
\def\XX{\cal X}
\def\pp{\cal P}
\def\mm{\cal M}
\def\cc{\cal C}
\def\yy{{\cal Y}}
\def\zz{\cal Z}
\def\eps{\epsilon}
\def\dd{\Delta}
\def\ss{\Sigma}
\def\T{\Theta}

\def\acts{\curvearrowright}

\title{Smooth Structures and Normalized Ricci Flows on Non-Simply Connected Four-Manifolds}

\author{Masashi Ishida and Ioana {\c S}uvaina}

%\date{}

\maketitle

\begin{abstract}
A solution to the normalized Ricci flow is called non-singular if it exists for all time with uniformly bounded sectional curvature. By using the techniques developed by the present authors \cite{ism, sio}, we study the existence or non-existence of non-singular solutions of the normalized Ricci flow on $4-$manifolds with non-trivial fundamental group and the relation with the smooth structures. For example, we prove that, for any finite cyclic group ${\mathbb Z}_{d}$, where $d>1$, there exists a compact topological $4-$manifold $X$ with fundamental group ${\mathbb Z}_{d}$, which admits at least one smooth structure for which non-singular solutions of the normalized Ricci flow exist, but also admits infinitely many distinct smooth structures for which {\it no} non-singular solution of the normalized Ricci flow exists. 
%Hence, the difference between existence and non-existence of non-singular solutions to the normalized Ricci flow on $4-$manifolds strictly depends on the choice of smooth structure. 

Related non-existence results on non-singular solutions are also proved. Among others, we show that there are no non-singular $\ZZ_d-$equivariant solutions to the normalized Ricci flow on appropriate  connected sums of $\bcp ^2$s and $\cpb $s ($d>1$).
\end{abstract}

%\tableofcontents  

%%%%%%%%%%%%%%%%%%%%%%%%%%%%%%%%%%%%%%%%%%%%%%%%%%%%%%%%%%%%%%%%%%%%%%%
\section{Introduction}
%%%%%%%%%%%%%%%%%%%%%%%%%%%%%%%%%%%%%%%%%%%%%%%%%%%%%%%%%%%%%%%%%%%%%%%

The normalized Ricci flow on a closed oriented Riemannian manifold $X$ of dimension $n \geq 3$ is the following evolution equation:
\begin{eqnarray*}\label{Ricci}
 \frac{\partial }{\partial t}{g}=-2{Ric}_{g} + \frac{2}{n}\Big(\frac{{\int}_{X} {s}_{g} d{\mu}_{g}}{{\int}_{X}d{\mu}_{g}} \Big) {g}, 
\end{eqnarray*}
where ${Ric}_{g}$ is the Ricci curvature of the evolving Riemannian metric $g$, ${s}_{g}$ denotes the scalar curvature of $g$ and $d{\mu}_{g}$ is the volume measure with respect to $g$. Recall that an one-parameter family of metrics $\{g(t)\}$, where $t \in [0, T)$ for some $0<T\leq \infty$, is called a solution to the normalized Ricci flow if it satisfies the above equation for all $x \in X$ and $t \in [0, T)$. A solution $\{g(t)\}$ on a time interval $[0, T)$ is said to be maximal if it can not be extended past time $T$. Let us recall the following definition, which was first introduced and studied by Hamilton \cite{ha-0, c-c}: 
\begin{defn}\label{non-sin}
A maximal solution $\{g(t)\}$, $t \in [0, T)$, to the normalized Ricci flow on $X$ is called non-singular if $T=\infty$ and the Riemannian curvature tensor $Rm_{g(t)}$ of $g(t)$ satisfies 
\begin{eqnarray*}
\sup_{X \times [0, T)}|Rm_{g(t)}| < \infty. 
\end{eqnarray*}
\end{defn} 
Hamilton \cite{ha-0} classified non-singular solutions to the normalized Ricci flow on $3-$manifolds. Though there are many results on the non-singular solutions in higher dimensions $n \geq 4$ (cf. \cite{c-c}), the existence or non-existence of non-singular solutions to the normalized Ricci flow is still mysterious in general. In this article, we would like to study the case of $n=4$. One of special properties in dimension 4 is the existence of exotic smooth structures (cf. \cite{don, free}). By using Seiberg-Witten monopole equations \cite{w}, the first author \cite{ism} pointed out that the existence or non-existence of non-singular solutions to the normalized Ricci flow on $4-$manifolds  depends on the choice of smooth structure. However, all known results were for the simply connected case. The main purpose of this article is to explore the case of non-simply connected $4-$manifolds by using the techniques developed by the present authors \cite{ism, sio}.  \par
To state our main results precisely, let us briefly recall the definition of the Yamabe invariant of any closed oriented Riemannian manifold $X$ of dimension $n\geq 3$. By the affirmative solution of the Yamabe problem \cite{yam}, which is due to Trudinger, Aubin, and Schoen \cite{aubyam,rick,trud}, every conformal class on any smooth compact manifold contains a Riemannian metric of constant scalar curvature. For each conformal class $[g]=\{ vg ~|~v: X\to {\Bbb R}^+\}$, we can consider the following associated number
\begin{eqnarray*}
Y_{[g]} = \inf_{h \in [g]}  \frac{\int_X 
s_{{h}}~d\mu_{{h}}}{\left(\int_X 
d\mu_{{h}}\right)^{\frac{n-2}{n}}}. 
\end{eqnarray*}
This is called Yamabe constant of the conformal class $[g]$. Kobayashi \cite{kob} and Schoen \cite{sch} independently introduced the following invariant of $X$:
\begin{eqnarray*}
{\mathcal Y}(X) = \sup_{ [g] \in \mathcal{C}}Y_{[g]}, 
\end{eqnarray*}
where $\mathcal{C}$ is the set of all conformal classes on $X$. This invariant is called Yamabe invariant of $X$. \par
It was pointed out in \cite{ism} that for any closed oriented smooth $4-$manifold $X$ with ${\mathcal Y}(X)<0$, the existence of the non-singular solution of the normalized Ricci flow on $X$ forces the following topological constraint on the Euler characteristic $\chi(X)$ and signature $\tau(X)$ of $X$:
\begin{eqnarray*}\label{FZZ}
2 \chi(X) > 3|\tau(X)|. 
\end{eqnarray*}
In this article, let us call this the strict generalized Hitchin-Thorpe Inequality. Under these notations, the first main result of the present article can be stated as follows: 
\begin{main}\label{main-A}
For any finite cyclic group ${\mathbb Z}_{d}:={\mathbb Z}/d{\mathbb Z}$, where $d>1$, there exist infinitely many pairs 
\begin{eqnarray*} 
\Big( X_i ,\{Z_{i,j} \}_{j \in \NN} \Big)_{i \in {\mathbb N}}
\end{eqnarray*}
of compact, oriented, smooth $4-$manifolds with fundamental group ${\mathbb Z}_{d}$ and satisfying the following properties:
\begin{itemize}
\item [ 1. ] For any fixed $i$, any two manifolds in $\{X_i,Z_{i,1},Z_{i,2},\cdots\}$ are
 homeomorphic, but no two are diffeomorphic to each other; 
\item [ 2. ] $X_i$ satisfies  ${\mathcal Y}(X_i)<0$ and there exist non-singular solutions to the the normalized Ricci flow on $X_i$. In particular, $X_i$ satisfies the strict generalized Hitchin-Thorpe Inequality, i.e., $2 \chi(X_i) > 3|\tau(X_i)|$. 
\item [ 3. ] $Z_{i,j}$ also satisfies ${\mathcal Y}(Z_{i,j}) <0$ and $2\chi(Z_{i,j})>3|\tau(Z_{i,j})|$, but there are no non-singular solutions to the normalized Ricci flow on $Z_{i,j}$. 
\end{itemize} 

 Moreover, their universal covers, $\widetilde{X_i}$ and $ \widetilde{Z_{i,j}},$ satisfy the following properties:
\begin{itemize}
\item [ 4. ] $\widetilde{Z_{i,j}}$ is diffeomorphic to $ n\bcp^2 \# m\overline{\bcp}^2$, where $n=b_2^+(\widetilde{X_i})$ and $m=b_2^-(\widetilde{X_i})$.  
\item [ 5. ] $\widetilde{X_i} $ and $\widetilde{Z_{i,j}}$ are not diffeomorphic, but they become diffeomorphic after taking the connected sum with one copy of $\bcp^2$.
\end{itemize} 

\end{main}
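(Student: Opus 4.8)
The plan is to realize each $X_i$ as a free quotient of a simply connected minimal surface of general type and each $Z_{i,j}$ as a homeomorphic but smoothly distinct free quotient whose universal cover is a \emph{standard} connected sum, and then to play an honest Einstein metric on $X_i$ against a Seiberg--Witten curvature obstruction on $Z_{i,j}$. Concretely, for suitable parameters I would fix a simply connected minimal complex surface of general type $W$ with ample canonical bundle, odd indefinite intersection form, $p_g\ge 1$, carrying a free holomorphic $\ZZ_d$-action, and enjoying a Mandelbaum--Moishezon dissolving property $W\#\bcp^2\cong (b_2^+(W)+1)\bcp^2\#\,b_2^-(W)\,\cpb$. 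Since the form is odd and indefinite, Freedman's theorem \cite{free} gives that $W$ is homeomorphic to $n\bcp^2\#m\cpb$ with $n=b_2^+(W)$, $m=b_2^-(W)$. I then set $X_i:=W/\ZZ_d$, so that $\pi_1(X_i)=\ZZ_d$ and $\widetilde{X_i}=W$; letting $W$ range over an infinite family of such surfaces produces the infinitely many $i$.

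For the existence clause (property 2) I would argue as follows. By Aubin and Yau, $W$ admits a unique Kähler--Einstein metric of negative scalar curvature; by uniqueness any biholomorphism is an isometry for it, so the free $\ZZ_d$-action is isometric and the metric descends to an Einstein metric $g_i$ on $X_i$. An Einstein metric is a stationary solution of the normalized Ricci flow, since $\mathrm{Ric}=\lambda g$ forces the right-hand side $-2\mathrm{Ric}+\tfrac{2}{n}\overline{s}\,g$ to vanish; hence $g_i(t)\equiv g_i$ is non-singular, proving existence. Because $X_i$ carries a nonzero Seiberg--Witten basic class, LeBrun's curvature bound gives $\mathcal{Y}(X_i)<0$; moreover $2\chi(X_i)+3\tau(X_i)=c_1^2(W)/d>0$ and, by the Bogomolov--Miyaoka--Yau inequality, $2\chi(X_i)-3\tau(X_i)=(4c_2(W)-c_1^2(W))/d\ge c_2(W)/d>0$, so the strict generalized Hitchin--Thorpe inequality holds (alternatively one invokes the implication recalled in the Introduction directly from the existence of the non-singular solution and $\mathcal{Y}(X_i)<0$).

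For the remaining properties I would use the equivariant surgery and twisted-monopole techniques of \cite{ism,sio} to construct, for each $i$, an infinite family $\{Z_{i,j}\}$ with $\pi_1=\ZZ_d$ and universal cover diffeomorphic to the standard sum $n\bcp^2\#m\cpb$, each homeomorphic to $X_i$. Homeomorphism follows because the members share $\pi_1$, $\chi$, $\tau$, and $w_2$, by Freedman \cite{free} combined with the Hambleton--Kreck classification of closed topological $4$-manifolds with finite cyclic fundamental group; pairwise non-diffeomorphism, and the distinction from $X_i$, come from pairwise distinct $\ZZ_d$-twisted Seiberg--Witten basic classes. Each $Z_{i,j}$ is arranged to carry a collection of twisted monopole classes, in sufficiently general position and growing in number with $j$, so that the refined Seiberg--Witten curvature estimate along a non-singular solution — LeBrun's convex-geometric lower bound for $\int_{Z_{i,j}} s^2$ together with its Weyl-curvature companion, extended to the normalized Ricci flow in \cite{ism} — forces $2\chi+3\tau$ to dominate a convex functional of these classes whose value the construction pushes past $2\chi(Z_{i,j})+3\tau(Z_{i,j})$. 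This rules out any non-singular solution (property 3), while the existence of the twisted basic class again gives $\mathcal{Y}(Z_{i,j})<0$, and homeomorphism with $X_i$ gives strict Hitchin--Thorpe. Finally $\widetilde{Z_{i,j}}=n\bcp^2\#m\cpb$ carries positive scalar curvature and has vanishing Seiberg--Witten invariants, whereas $\widetilde{X_i}=W$ has a basic class, so the covers are homeomorphic but not diffeomorphic, and the dissolving property built into $W$ yields $\widetilde{X_i}\#\bcp^2\cong\widetilde{Z_{i,j}}\#\bcp^2$, establishing properties 4 and 5.

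The principal obstacle is the construction and quantitative control of the $Z_{i,j}$. Seiberg--Witten invariants are not multiplicative under finite covers, so the entire mechanism lives in the gap between the trivial invariants of the standard universal cover and nontrivial $\ZZ_d$-twisted invariants downstairs. Because a single simple-type basic class $\mathfrak a$ satisfies $\mathfrak a^2=2\chi+3\tau$, its contribution to the estimate is a homeomorphism invariant and cannot by itself separate $X_i$ from $Z_{i,j}$; genuinely new input is required, namely \emph{several} twisted monopole classes whose mutual position prevents their self-dual projections from being simultaneously shrunk by any metric. The delicate points are therefore (i) producing, for each $j$, such a configuration of twisted monopole classes so that the convex-geometric bound strictly exceeds the threshold imposed by a non-singular solution, rather than merely adding collinear classes that a rechoice of metric would absorb; (ii) keeping $\chi$, $\tau$, $w_2$, and $\pi_1$ fixed so the homeomorphism type does not move; and (iii) keeping the universal cover standard throughout the surgery. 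Verifying that the extension of LeBrun's estimate to twisted monopole classes remains valid along the normalized Ricci flow, and that the surgery genuinely raises the effective bound, is where the main technical effort of \cite{ism,sio} is concentrated.
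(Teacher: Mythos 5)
Your treatment of the $X_i$ and the existence clause is essentially sound and close to the paper's: the surfaces of general type with $\pi_1=\ZZ_d$, ample canonical bundle, $w_2$-type I, odd intersection form, and almost completely decomposable universal cover are exactly what Proposition \ref{sio-thm} provides, and while the paper invokes Cao's flow convergence (Theorem \ref{cao-K}) where you invoke the stationary Aubin--Yau K\"ahler--Einstein metric, either gives property 2. The genuine gap is in the $Z_{i,j}$: you never construct them, and the mechanism you posit --- $\ZZ_d$-twisted monopole classes produced by equivariant surgery, ``several in general position,'' plus an extension of LeBrun's convex-geometric estimate to twisted classes along the normalized Ricci flow --- does not exist in \cite{ism,sio} and is not needed. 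Your motivating claim that it is forced is incorrect on two counts. First, you argue that since the universal cover is a standard connected sum with vanishing Seiberg--Witten invariants, only twisted invariants can be nontrivial downstairs; but the \emph{ordinary} Seiberg--Witten invariants of the non-simply-connected quotient can be, and in the paper are, nontrivial: the paper takes $Z_{i,j}:=Z'_{i,j}\#Y_d\#k_i\cpb$ with $Z'_{i,j}$ simply connected minimal symplectic and $Y_d\#k_i\cpb$ of $b^+=0$, whose ordinary SW basic classes survive and remain distinct by \cite{k-m-t}. Second, your ``obstacle'' that a simple-type basic class satisfies $\mathfrak a^2=2\chi+3\tau$ and hence cannot separate homeomorphic manifolds misses the actual mechanism: the curvature bound (\ref{mono-1}) underlying Theorem \ref{ricci-ob-3} retains $\frac23\,c_1^2(Z'_{i,j})$ of the \emph{un-blown-up} symplectic piece, which is not a homeomorphism invariant of $Z_{i,j}$; blowing up lowers $2\chi+3\tau$ without lowering the estimate, and this single untwisted class suffices.

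Concretely, the quantitative input you are missing is geography. By Braungardt--Kotschick (Proposition \ref{geogr-sympl}) one chooses infinitely many $Z'_{i,j}$ with $\chi_h(Z'_{i,j})=\chi_h(X_i)=:x_i$ and $8x_i<c_1^2(Z'_{i,j})<9x_i$, while \c{S}uvaina's surfaces satisfy $c_1^2(X_i)<5x_i$; hence $k_i=c_1^2(Z'_{i,j})-c_1^2(X_i)>3x_i>\frac13 c_1^2(Z'_{i,j})=\frac13(2\chi+3\tau)(Z'_{i,j})$, which is precisely the threshold of Corollary \ref{cor}, ruling out all (quasi-)non-singular solutions on $Z_{i,j}$ with no new twisted estimate. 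Homeomorphism to $X_i$ is then Hambleton--Kreck (Theorem \ref{ha-kr}), since $\pi_1$, $c_1^2$, $\chi_h$, parity, and $w_2$-type I all match; negativity of the Yamabe invariant follows from the surviving ordinary SW classes via LeBrun \cite{L-1}; and property 4 requires Ue's rational homology sphere $Y_d$ \cite{ue} with universal cover $\#(d-1)(S^2\times S^2)$, giving $\wt{Z_{i,j}}\cong dZ'_{i,j}\#(d-1)\bcp^2\#(d-1+dk_i)\cpb$, which dissolves by almost complete decomposability of $Z'_{i,j}$. Your proposal leaves exactly these steps --- the explicit construction, the threshold $k_i\geq\frac13(2\chi+3\tau)$, and the cover computation --- undone and replaced by speculative twisted-monopole machinery, so as it stands property 3, the heart of the theorem, is unestablished.
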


One of the ingredients needed in the proof of  Theorem \ref{main-A} is an existence theorem of complex surfaces of general type with finite cyclic fundamental group, which was proved by the second author \cite{sio}.  \par
For suitable infinitely many integers $n,m$, on connected sums $n {\mathbb C}{P}^2 \# m \overline{{\mathbb C}{P}^2}$, it was pointed out in \cite{ism} that there are infinitely many distinct smooth structure for which no non-singular solutions to the normalized Ricci flow exists. In particular, such smooth structures have negative Yamabe invariants. On the other hand, for the standard smooth structure on $n {\mathbb C}{P}^2 \# m \overline{{\mathbb C}{P}^2}$, the Yamabe invariant must be positive. In this case, a general obstruction like $2 \chi > 3|\tau|$ to the existence of non-singular solution to the normalized Ricci flow is still unknown. Hence, in the case of ${\mathcal Y}>0$, the situation is quite different from the case of ${\mathcal Y}<0$.  In what follows we endow our manifolds with symmetries induced by a group action and we study the equivariant situation. We have the following definition

%However, it is known that if the initial metric $g(0)$ satisfies $\lambda(g(0))>0$, where $\lambda(g(0))$ is the lowest eigenvalue of the elliptic operator $-4{\Delta}_{g(0)} + {s}_{g(0)}$, the non-singular solution $\{g(t)\}$ converges to a shrinking Ricci soliton solution \cite{fz-1}. It is also known that the existence of any shrinking Ricci soliton on any smooth manifold forces its fundamental group to be finite \cite{f-g}. However, there is not much known about the existence or non-existence of non-singular solutions in the case of the standard smooth structure with ${\mathcal Y}>0$. In this direction, we are able to prove an interesting result. To state the result precisely, let us introduce the following definition:
\begin{defn}
Let $G$ be a group acting smoothly $\rho: G\acts X$ on the closed smooth $4-$manifold $X$. Let $\{g(t)\}$, $t \in [0, T), 0<T \leq \infty$ be a solution of the normalized Ricci flow on $X$. We shall say that  $\{g(t)\}$ is a $G-$equivariant solution of the Ricci flow if the metric $g(t)$ is $G$-invariant for all $t \in [0, T)$. 
\end{defn}
We study the special case when $G$ is a finite group acting freely on $X.$ In this situation, it is sufficient to stat with a $G-$invariant initial metric $g_0.$ The second main result of this article can be stated as follows:
\begin{main}\label{main-B}
Let  $d\geq 2$ be an integer number. For any small $\delta >0$, there is a positive constant $C(\delta)$ such that for any  integer lattice point $(n,m)$ satisfying $d/n,d/m$ and $0<n < (6-\delta)m - C(\delta)$, there exist infinitely many, distinct, free, smooth ${\mathbb Z}_{d}$-actions on the smooth 4-manifold:
\begin{eqnarray*}
X:=(2m-1) {\mathbb C}{P}^2 \# (10m-n-1) \overline{{\mathbb C}{P}^2}, i.e. (2\chi+3\tau)(X)=n, \frac{\chi+\tau}4(X)=m.
\end{eqnarray*}
 Moreover, on $X$ there is  no non-singular $\ZZ_d-$equivariant solution to the normalized Ricci flow, 
 for any of the above $\ZZ_d-$actions.
 \end{main}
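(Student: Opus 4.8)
The plan is to prove Theorem B by combining an equivariant (quotient) Seiberg-Witten obstruction with an existence construction of infinitely many free $\ZZ_d$-actions. The key observation is that a free $\ZZ_d$-action on $X$ together with a $\ZZ_d$-equivariant non-singular solution to the normalized Ricci flow would descend to the quotient $X/\ZZ_d$, producing a non-singular solution there. So the strategy is to transfer the problem to the quotient manifold, apply the non-existence machinery of \cite{ism} (the Seiberg-Witten curvature estimates that yield obstructions of Hitchin-Thorpe type), and pull the contradiction back upstairs.

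**First I would** set up the equivariant-to-quotient correspondence carefully. Since the action is free, $Y := X/\ZZ_d$ is a smooth closed oriented $4$-manifold with $\pi_1(Y) \supseteq \ZZ_d$ and $\chi(Y) = \chi(X)/d$, $\tau(Y) = \tau(X)/d$ (the divisibility conditions $d \mid n, d \mid m$ in the statement are exactly what guarantee these are integers, so the quotient is a genuine manifold with the expected characteristic numbers). A $\ZZ_d$-invariant metric $g(t)$ on $X$ projects to a metric $\bar g(t)$ on $Y$, and because the projection is a Riemannian covering the curvature bound $\sup |Rm_{g(t)}| < \infty$ descends verbatim; the normalized Ricci flow equation is local and $\ZZ_d$-equivariant, so $\{\bar g(t)\}$ solves the normalized Ricci flow on $Y$ and is non-singular. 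Thus a non-singular $\ZZ_d$-equivariant solution on $X$ yields a non-singular solution on $Y$.

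**Next I would** invoke the monopole-class obstruction. For this I need $Y$ to be of the type where the Seiberg-Witten estimates apply, i.e.\ $Y$ should carry monopole classes (a nonzero Seiberg-Witten invariant, or a class surviving under the refined invariants used in \cite{ism, sio}), forcing $\mathcal Y(Y) < 0$ and more importantly forcing a strict inequality that the quotient \emph{violates}. The point of the numerical constraint $0 < n < (6-\delta)m - C(\delta)$ is precisely to make the relevant refined Seiberg-Witten/Bauer-Furuta-type bound fail on $Y$: with $2\chi+3\tau = n$ and $(\chi+\tau)/4 = m$ on $X$, dividing by $d$ gives the corresponding numbers on $Y$, and the estimate from \cite{ism} that a non-singular solution demands (a curvature integral inequality sharper than $2\chi > 3|\tau|$, of the form controlling $(2\chi+3\tau)$ against the monopole-class square) is arranged to be contradicted. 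So the existence half of the theorem is: construct infinitely many distinct free $\ZZ_d$-actions on $X$ whose quotients $Y$ are complex surfaces of general type (or connected sums detected by the second author's construction \cite{sio}) carrying the requisite monopole classes; the distinctness of the actions follows from the distinctness of the smooth structures on the quotients, itself detected by Seiberg-Witten invariants.

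**The hard part will be** the construction and bookkeeping of the free actions: I must produce, for each admissible lattice point $(n,m)$, infinitely many free $\ZZ_d$-quotients $Y$ that (i) have universal cover $X = (2m-1)\bcp^2 \# (10m-n-1)\cpb$ with the standard smooth structure, (ii) carry enough Seiberg-Witten basic classes to trigger the obstruction, and (iii) are pairwise non-diffeomorphic so the actions are genuinely distinct. This is where the general-type surface existence theorem of \cite{sio} and the connected-sum techniques of \cite{ism} must be combined, and where the constants $\delta$ and $C(\delta)$ enter: one needs the monopole-class inequality on $Y$ to fail strictly, uniformly over the infinite family, which dictates the slope $6-\delta$ and the additive correction $C(\delta)$ coming from the finitely many "error" terms in the connected-sum stabilization. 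Once the quotients are in hand with their basic classes, the final contradiction is routine: assume a non-singular $\ZZ_d$-equivariant solution exists on $X$, descend to $Y$, apply the \cite{ism} estimate to conclude the violated inequality must hold, contradiction.
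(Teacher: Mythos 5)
Your overall skeleton --- descend a $\ZZ_d$-equivariant non-singular solution to the quotient $Y=X/\ZZ_d$, obstruct non-singular solutions on $Y$ via the Seiberg--Witten estimates of \cite{ism}, and deduce distinctness of the actions from distinctness of the quotient smooth structures --- is exactly the paper's strategy, and your first paragraph (the covering argument and the role of the divisibility conditions $d\mid n$, $d\mid m$) is fine. But there are two genuine problems in how you propose to fill in the construction. First, the quotients cannot be complex surfaces of general type, as you suggest: a minimal surface of general type with ample canonical bundle admits non-singular solutions by Cao's theorem (Theorem \ref{cao-K}; this is precisely how the paper produces the \emph{existence} half of Theorem \ref{main-A}), and moreover its universal cover is never diffeomorphic to a connected sum of $\bcp^2$'s and $\cpb$'s, so your own condition (i) --- universal cover equal to the standard $X$ --- would fail. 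The obstruction is not triggered abstractly by ``monopole classes making an inequality fail on $Y$''; it is triggered by building $Y$ as a connected sum $M_i\#Y_d\#k\cpb$, where $M_i$ is a \emph{simply connected minimal symplectic} manifold (Taubes gives the nonzero Seiberg--Witten invariant, which survives the sum with the negative-definite piece by Kotschick--Morgan--Taubes), and where the number of $\cpb$ summands satisfies $k\geq\frac13(2\chi+3\tau)(M_i)$ so that Corollary \ref{cor} applies.

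Second, the heart of the theorem --- producing such quotients for \emph{every} admissible $(n,m)$ with universal cover the standard $X$ --- is left in your plan as an acknowledged black box, and the two ingredients that make it work are absent: (a) the Braungardt--Kotschick geography result (Proposition \ref{geogr-sympl}) supplying, for each lattice point in the slope-$(9-\epsilon)$ region, infinitely many non-diffeomorphic \emph{almost completely decomposable} minimal symplectic manifolds $M_i$; and (b) Ue's rational homology spheres $Y_d$ with $\pi_1(Y_d)=\ZZ_d$ whose universal cover is $\#(d-1)(S^2\times S^2)$. Almost complete decomposability of $M_i$ together with (b) is what makes all the covers $\wt{X_i}\cong dM_i\#dk\cpb\#(d-1)(S^2\times S^2)$ collapse to the single standard manifold $(2m-1)\bcp^2\#(10m-n-1)\cpb$, independently of $i$; you never supply a mechanism for this step. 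The bookkeeping is also where your $\delta$ and $C(\delta)$ actually come from: taking $c_1^2(M_i)=[\frac{3n}{2d}]+1$, $\chi_h(M_i)=\frac{m}{d}$ and $k=[\frac{n}{2d}]+1$, the requirement $k>\frac13 c_1^2(M_i)$ converts the slope-$9$ symplectic geography into the slope-$6$ hypothesis $n<(6-\delta)m-C(\delta)$, with $\epsilon=\frac32\delta$ and $C(\delta)=\frac{2d}{3}(c(\epsilon)+1)$. Without these pieces --- the correct choice of quotient building blocks, the decomposability of the covers, and the slope conversion --- the proof does not close.
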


As the quotient manifolds are smooth and have the same topological invariants, they are homeomorphic according to Theorem \ref{ha-kr}. The actions of $\ZZ_d$ are distinct in the sense that the quotient manifolds have {\em distinct smooth structures.}

We also prove related results on non-existence of non-singular solutions to the normalized Ricci flow in Section \ref{related} below. 
 
\bigskip
\noindent
{\bf Acknowledgments.} 
We would like to express our deep gratitude to Claude LeBrun for his warm encouragements. The first author is partially supported by the Grant-in-Aid for Scientific Research (C), Japan Society for the Promotion of Science, No. 20540090. The second author would like to thank the Institut des Hautes {\'E}tudes Scientifiques for its warm hospitality.

%%%%%%%%%%%%%%%%%%%%%%%%%%%%%%%%%%%%%%%%%%%%%%%%%%%%%%%%%%%%%%%%%%%%%%%
\section{Obstructions to the existence of non-singular solutions to the normalized Ricci flow}
%%%%%%%%%%%%%%%%%%%%%%%%%%%%%%%%%%%%%%%%%%%%%%%%%%%%%%%%%%%%%%%%%%%%%%%

Let us introduce the following definition: 
\begin{defn}[\cite{ism}]\label{bs}
A maximal solution $\{g(t)\}$, $t \in [0, T)$, to the normalized Ricci flow  on $X$ is called quasi-non-singular if $T=\infty$ and the scalar curvature $s_{g(t)}$ of $g(t)$ satisfies 
\begin{eqnarray*}
\sup_{X \times [0, T)}|{s}_{g(t)}| < \infty. 
\end{eqnarray*}
\end{defn}
Notice that any non-singular solution is quasi-non-singular, but the converse is not true in general. \par
By using the Seiberg-Witten monopole equations, the following obstruction to the existence of quasi-non-singular solutions of the normalized Ricci flow on $4-$manifolds was proved: 
\begin{thm}[\cite{ism}]\label{ricci-ob-1}
Let $X$ be a closed almost-complex $4-$manifold with ${b}^+(X) \geq 2$ and $2\chi(X) + 3 \tau(X)>0$. Assume that $X$ has a non-trivial integer valued Seiberg-Witten invariant $SW_{X}(\Gamma_{X}) \not=0$, where $\Gamma_{X}$ is the spin${}^c$-structure compatible with the almost-complex structure. Let $N$ be a closed, oriented, smooth, $ 4-$manifold with $b^{+}(N)=0$. Then, there are no quasi-non-singular solutions to the normalized Ricci flow in the sense of Definition \ref{bs} on the connected sum $M:=X \# N$ if the following holds:
\begin{eqnarray}\label{ob-N-Ricci-1}
12b_{1}(N) + 3{b}^{-}(N) > 2\chi(X) + 3 \tau(X). 
\end{eqnarray}
In particular, under this condition, there are no non-singular solutions to the normalized Ricci flow in the sense of Definition \ref{non-sin}. 
\end{thm}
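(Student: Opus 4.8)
The plan is to derive a contradiction by combining a curvature estimate valid along any quasi-non-singular solution with a Seiberg-Witten lower bound for the relevant curvature quantity on the connected sum $M = X \# N$. The underlying principle, going back to the first author's work in \cite{ism}, is that the normalized Ricci flow, when it produces a quasi-non-singular solution, forces an inequality relating $2\chi(M) + 3\tau(M)$ to an integral expression in the scalar curvature (and more refined Weyl-type terms) of the limiting/averaged geometry, while the Seiberg-Witten invariants impose a lower bound on precisely that same integral. When the topological hypothesis \eqref{ob-N-Ricci-1} holds, these two bounds are incompatible.

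First I would set up the Seiberg-Witten input. Because $X$ carries the spin${}^c$-structure $\Gamma_X$ compatible with its almost-complex structure and has $SW_X(\Gamma_X) \neq 0$ with $b^+(X) \geq 2$, the invariant is stable under connected sum with $N$, provided $b^+(N) = 0$ (summing with a negative-definite piece does not kill the monopole class; this is the standard gluing/blow-up behavior of Seiberg-Witten theory, and is exactly the mechanism exploited in \cite{ism, sio}). Thus $M = X \# N$ has a nonzero Seiberg-Witten invariant for a spin${}^c$-structure whose expected dimension is zero, and the associated monopole class $c$ satisfies the curvature estimate: for any metric $g$ on $M$, the scalar curvature and the self-dual Weyl curvature are controlled below by $c^+$, yielding the LeBrun-type scalar-curvature and mixed estimates. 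Concretely, this produces a lower bound for $\int_M s_g^2\, d\mu_g$ (and the sharper refinement involving the Weyl term) in terms of $(2\chi + 3\tau)(X)$, since $c^2 = (2\chi + 3\tau)(X)$ for the canonical class of the almost-complex part.

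Next I would bring in the dynamics. For a quasi-non-singular solution $\{g(t)\}$ on $[0,\infty)$, the scalar curvature is uniformly bounded, and the normalized Ricci flow's monotonicity together with the Gauss-Bonnet and signature formulas gives an asymptotic estimate: averaging the curvature integrals along the flow and passing to the limit, one obtains that $(2\chi + 3\tau)(M)$ is bounded above by a time-averaged integral of the scalar curvature (and Weyl) data, which by the uniform scalar bound is finite and is in turn controlled by the Seiberg-Witten lower bound applied to $X$'s part. The arithmetic then reduces to comparing $(2\chi + 3\tau)(M)$ with $(2\chi + 3\tau)(X)$: using $\chi(M) = \chi(X) + \chi(N) - 2$ and $\tau(M) = \tau(X) + \tau(N)$, together with $b^+(N) = 0$ (so $\tau(N) = -b^-(N)$ and $\chi(N) = 2 - 2b_1(N) + b^-(N)$), one computes
\begin{eqnarray*}
(2\chi + 3\tau)(M) = (2\chi + 3\tau)(X) - 12b_1(N) - 3b^-(N).
\end{eqnarray*}
Hypothesis \eqref{ob-N-Ricci-1} says precisely that the subtracted term exceeds $(2\chi + 3\tau)(X)$, forcing a sign that contradicts the lower bound supplied by the nonvanishing Seiberg-Witten invariant of $X$. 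I expect the main obstacle to be the careful justification of the time-averaging step: one must control the flow long enough to extract the limiting curvature inequality without assuming convergence of the metrics, relying instead on the uniform scalar-curvature bound and the integral (rather than pointwise) form of the Seiberg-Witten estimates. This is where the techniques of \cite{ism} are essential, since the $L^2$-estimates survive the averaging even when the solution does not converge geometrically. Once the averaged inequality is in place, the final contradiction with \eqref{ob-N-Ricci-1} is immediate, and the non-singular case follows because every non-singular solution is quasi-non-singular.
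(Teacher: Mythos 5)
Your overall strategy---Seiberg--Witten curvature estimates on $M$ combined with time-averaged Gauss--Bonnet/signature integrals along a quasi-non-singular flow---is the same as the paper's (cf.\ the proof of Theorem \ref{ricci-ob-3}), but your final arithmetic contains a genuine error that breaks the argument. The displayed identity
$(2\chi+3\tau)(M)=(2\chi+3\tau)(X)-12b_1(N)-3b^-(N)$
is false, and is inconsistent with the very ingredients you list: from $\chi(M)=\chi(X)+\chi(N)-2$, $\tau(M)=\tau(X)+\tau(N)$, $\tau(N)=-b^-(N)$ and $\chi(N)=2-2b_1(N)+b^-(N)$ one gets
$(2\chi+3\tau)(M)=(2\chi+3\tau)(X)-4b_1(N)-b^-(N)$,
i.e.\ your coefficients are off by a factor of $3$. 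That factor of $3$ is not cosmetic: it is exactly what the constant $\tfrac23$ in the Seiberg--Witten estimate (\ref{mono-1}), namely $\frac{1}{4\pi^2}\int_M\bigl(2|W^+_g|^2+\frac{s_g^2}{24}\bigr)d\mu_g\ \geq\ \tfrac23\,(2\chi(X)+3\tau(X))$, accounts for. Consequently your claimed contradiction ``forcing a sign'' cannot work: with the correct identity, the hypothesis (\ref{ob-N-Ricci-1}) only yields $(2\chi+3\tau)(M)<\tfrac23(2\chi+3\tau)(X)$, which can perfectly well be positive (take $b_1(N)=0$ and $b^-(N)$ slightly above $\tfrac13(2\chi+3\tau)(X)$), so no sign obstruction arises; a sign contradiction would only rule out the strictly smaller range $12b_1(N)+3b^-(N)>3(2\chi+3\tau)(X)$. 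The contradiction must instead be with the sharp threshold $\tfrac23(2\chi+3\tau)(X)$, and nowhere in your sketch does this constant appear.

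There is also a direction-of-inequality problem in your averaging step. You say $(2\chi+3\tau)(M)$ is ``bounded above by a time-averaged integral \ldots which is in turn controlled by the Seiberg--Witten lower bound''; an upper bound chained with a lower bound proves nothing. The Gauss--Bonnet formula (\ref{GB}) contains the term $-\frac{1}{2}|\stackrel{\circ}{r}_{g(t)}|^2$, so dropping it bounds $(2\chi+3\tau)(M)$ from above at each time; what is needed is the reverse bound $(2\chi+3\tau)(M)\geq \liminf$ of the averaged Weyl-plus-scalar integral, and this only holds after one shows $\int_m^{m+1}\int_M|\stackrel{\circ}{r}_{g(t)}|^2\,d\mu_{g(t)}dt\to 0$ (Proposition \ref{key-prop}). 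That decay in turn requires the uniform negativity $\hat{s}_{g(t)}\leq -c<0$ of (\ref{mini-bound}), which is supplied by the Seiberg--Witten Yamabe estimate ${\mathcal Y}(M)\leq -4\pi\sqrt{2(2\chi(X)+3\tau(X))}<0$ of (\ref{yama-1})---this is where the hypothesis $2\chi(X)+3\tau(X)>0$ enters, and it is the mechanism (Lemma \ref{FZZ-prop} of Fang--Zhang--Zhang) your sketch only gestures at. Once these pieces are in place, the correct chain is $(2\chi+3\tau)(X)-4b_1(N)-b^-(N)=(2\chi+3\tau)(M)\geq\tfrac23(2\chi+3\tau)(X)$, whose contrapositive is precisely (\ref{ob-N-Ricci-1}) with the coefficients $12$ and $3$.
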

In what follows, we  refine this theorem, see Theorem \ref{ricci-ob-3} below. To prove our result we need to recall the following
\begin{lem}[\cite{fz-1, ism}]\label{FZZ-prop}
Let $X$ be a closed oriented Riemannian $n$-manifold and assume that there is a long time solution $\{g(t)\}$, $t \in [0, \infty)$, to the normalized Ricci flow. Assume moreover that the solution satisfies $|{s}_{g(t)}| \leq C$ and 
\begin{eqnarray}\label{mini-bound}
\hat{s}_{g(t)}:=\min_{x \in X}{s}_{g(t)}(x) \leq -c <0,
\end{eqnarray}
where the constants $C$ and $c$ are independent of both $x \in X$ and time $t \in [0, \infty)$. Then, the trace-free part $\stackrel{\circ}{r}_{g(t)}$ of the Ricci curvature satisfies 
\begin{eqnarray*}
{\int}^{\infty}_{0} {\int}_{X} |\stackrel{\circ}{r}_{g(t)}|^2 d{\mu}_{g(t)}dt < \infty. 
\end{eqnarray*}
In particular, when $m \rightarrow \infty$, 
\begin{eqnarray}\label{fzz-ricci-0}
{\int}^{m+1}_{m} {\int}_{X} |\stackrel{\circ}{r}_{g(t)}|^2 d{\mu}_{g(t)}dt \longrightarrow 0. 
\end{eqnarray}

\end{lem}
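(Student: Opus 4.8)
\medskip
\noindent\textbf{Proof proposal.} The plan is to extract the estimate from the evolution of the scalar curvature, using crucially that the normalized Ricci flow preserves the total volume $V:=\mv(X,g(t))$. Indeed, the normalization is arranged so that $\partial_t\, d\mu_{g(t)}=(\bar s_{g(t)}-s_{g(t)})\,d\mu_{g(t)}$, whose integral over $X$ vanishes; here $\bar s_{g(t)}:=\frac1V\int_X s_{g(t)}\,d\mu_{g(t)}$ is the average scalar curvature, which satisfies $|\bar s_{g(t)}|\le C$ by hypothesis. Writing $Ric_{g(t)}=\ro_{g(t)}+\frac1n s_{g(t)}\,g(t)$, so that $|Ric_{g(t)}|^2=|\ro_{g(t)}|^2+\frac1n s_{g(t)}^2$, the scalar curvature evolves by
\[
\frac{\partial}{\partial t}s_{g(t)}=\Delta_{g(t)}s_{g(t)}+2|\ro_{g(t)}|^2+\frac2n s_{g(t)}\big(s_{g(t)}-\bar s_{g(t)}\big).
\]

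First I would control the minimum $\hat s_{g(t)}$. Applying the maximum principle to the equation above, and noting that at a spatial minimum $\Delta_{g(t)}s_{g(t)}\ge0$ while the reaction term is at least $\frac2n\hat s_{g(t)}(\hat s_{g(t)}-\bar s_{g(t)})$, gives the differential inequality $\frac{d}{dt}\hat s_{g(t)}\ge \frac2n\hat s_{g(t)}(\hat s_{g(t)}-\bar s_{g(t)})$ in the barrier sense. Since $\hat s_{g(t)}\le-c<0$ and $\hat s_{g(t)}\le\bar s_{g(t)}$, the right-hand side is nonnegative, so $\hat s_{g(t)}$ is nondecreasing; being bounded above by $C$ it converges to some $\hat s_\infty\le -c$. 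Integrating in time and using $-\hat s_{g(t)}\ge c$, I would then obtain
\[
\frac{2c}{n}\int_0^\infty\big(\bar s_{g(t)}-\hat s_{g(t)}\big)\,dt\le \frac2n\int_0^\infty \hat s_{g(t)}\big(\hat s_{g(t)}-\bar s_{g(t)}\big)\,dt\le \hat s_\infty-\hat s_{g(0)}<\infty,
\]
so that the nonnegative function $\bar s_{g(t)}-\hat s_{g(t)}$ is integrable over $[0,\infty)$.

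Next, using $|s_{g(t)}|\le C$ together with $0\le s_{g(t)}-\hat s_{g(t)}\le 2C$ and $\bar s_{g(t)}-\hat s_{g(t)}\le 2C$, I would establish the pointwise-in-time bound $\int_X(s_{g(t)}-\bar s_{g(t)})^2\,d\mu_{g(t)}\le 8CV(\bar s_{g(t)}-\hat s_{g(t)})$, which by the previous paragraph makes $\int_0^\infty\int_X(s_{g(t)}-\bar s_{g(t)})^2\,d\mu_{g(t)}\,dt$ finite. To finish, I would combine $\partial_t s_{g(t)}$ with $\partial_t\,d\mu_{g(t)}$ to get the total--scalar--curvature identity
\[
\frac{d}{dt}\int_X s_{g(t)}\,d\mu_{g(t)}=2\int_X|\ro_{g(t)}|^2\,d\mu_{g(t)}-\Big(1-\frac2n\Big)\int_X\big(s_{g(t)}-\bar s_{g(t)}\big)^2\,d\mu_{g(t)}.
\]
Integrating over $[0,T]$ and solving for the $\ro_{g(t)}$ term: the left side is bounded by $2CV$ because $\big|\int_X s_{g(t)}\,d\mu_{g(t)}\big|\le CV$, the subtracted term is controlled by the finite integral just obtained, and $1-\frac2n\ge0$ for $n\ge3$; hence $\int_0^T\int_X|\ro_{g(t)}|^2\,d\mu_{g(t)}\,dt$ is bounded independently of $T$, giving the desired $\int_0^\infty\int_X|\ro_{g(t)}|^2\,d\mu_{g(t)}\,dt<\infty$. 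The limit \eqref{fzz-ricci-0} is then immediate, being the tail of a convergent integral.

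The step I expect to be most delicate is the differential inequality for the minimum $\hat s_{g(t)}$: since $\hat s_{g(t)}$ is only Lipschitz in $t$, this requires the maximum principle for the minimum of the scalar curvature along the flow (in the barrier sense), and one must track the time-dependent average $\bar s_{g(t)}$ carefully through the reaction term. Getting the sign and the constant $1-\frac2n$ in the total--scalar--curvature identity exactly right is the other point to watch; once both are secured, the two time integrations are routine.
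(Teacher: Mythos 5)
Your proof is correct and is essentially the argument of Lemma 3.1 in \cite{fz-1} (which this paper cites rather than reproves, noting only that the restriction to $n=4$ and unit volume can be dropped): the evolution equation for $s_{g(t)}$, the maximum principle giving monotonicity of $\hat{s}_{g(t)}$ and hence integrability of $\bar{s}_{g(t)}-\hat{s}_{g(t)}$, and the total--scalar--curvature identity with the coefficient $1-\frac{2}{n}$. Your version also correctly supplies the generalizations the paper asserts, since volume preservation replaces the unit-volume normalization and all constants are tracked for general $n\geq 3$.
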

This lemma first appeared as Lemma 3.1 in \cite{fz-1} for $n=4$ and unit volume solutions. However, it is not hard to see that the same result still holds without such assumptions \cite{ism}. Notice also that Lemma \ref{FZZ-prop} tells us that any quasi-non-singular solution to the normalized Ricci flow satisfying (\ref{mini-bound}) must satisfy (\ref{fzz-ricci-0}). \par
By using Lemma \ref{FZZ-prop}, Seiberg-Witten monopole equations and the estimates of the Yamabe invariant, we can prove
\begin{prop}[\cite{ism}]\label{key-prop}
Let $X$ be a closed almost-complex 4-manifold with ${b}^+(X) \geq 2$ and $2 \chi(X) + 3\tau(X) > 0$. Assume that $X$ has a non-trivial integer valued Seiberg-Witten invariant $SW_{X}(\Gamma_{X}) \not=0$, where $\Gamma_{X}$ is the spin${}^c$-structure compatible with the almost-complex structure. Let $N$ be a closed oriented smooth $4-$manifold with $b^{+}(N)=0$. If a quasi-non-singular solution to the normalized Ricci flow on the connected sum $M:=X \# N$ exists, then
\begin{eqnarray}\label{fzz-ricci-011}
{\int}^{m+1}_{m} {\int}_{M} |\stackrel{\circ}{r}_{g(t)}|^2 d{\mu}_{g(t)}dt \longrightarrow 0
\end{eqnarray}
holds when $m \rightarrow +\infty$. 
\end{prop}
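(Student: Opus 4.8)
The plan is to verify that the given quasi-non-singular solution $\{g(t)\}$ on $M = X \# N$ satisfies all the hypotheses of Lemma \ref{FZZ-prop}, whose conclusion is precisely (\ref{fzz-ricci-011}). Two of the three hypotheses are immediate from quasi-non-singularity: the solution is maximal with $T=\infty$, so it is defined for all $t \in [0,\infty)$, and $|s_{g(t)}| \le C$ for a uniform constant $C$. The entire content therefore lies in establishing the remaining hypothesis (\ref{mini-bound}), namely a uniform negative upper bound $\hat s_{g(t)} \le -c < 0$ on the minimum scalar curvature, which I would obtain from the negativity of the Yamabe invariant of $M$.

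First I would show ${\mathcal Y}(M) < 0$. Since $b^+(N)=0$ we have $b^+(M) = b^+(X) \ge 2$, and the spin${}^c$-structure $\Gamma_X$ with $SW_X(\Gamma_X)\ne 0$ induces a monopole class on $M$; as this class is carried by the $X$-summand, its square equals $c_1^2(\Gamma_X) = 2\chi(X)+3\tau(X) > 0$. The Seiberg-Witten curvature estimate of LeBrun then yields $\int_M s_g^2\, d\mu_g \ge 32\pi^2\big(2\chi(X)+3\tau(X)\big)$ for every Riemannian metric $g$ on $M$. Because the presence of a monopole class is incompatible with positive scalar curvature, ${\mathcal Y}(M)\le 0$; combined with the identity ${\mathcal Y}(M)^2 = \inf_g \int_M s_g^2\, d\mu_g$ (valid whenever ${\mathcal Y}(M)\le 0$) and the strictly positive lower bound just obtained, this forces ${\mathcal Y}(M)<0$.

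Next I would deduce (\ref{mini-bound}) by combining two elementary facts. The normalization used here is volume-preserving: one computes $\frac{d}{dt}\, d\mu_{g(t)} = (\bar s - s)\, d\mu_{g(t)}$ with $\bar s = \big(\int_M s\, d\mu\big)/\big(\int_M d\mu\big)$, so $\frac{d}{dt}\mathrm{Vol}(g(t)) = 0$ and $V := \mathrm{Vol}(g(t))$ is a fixed positive constant. Second, for any metric $g$ with $\hat s := \min_X s_g \le 0$, testing the Yamabe functional with the constant function and applying Hölder's inequality gives $Y_{[g]} \ge \hat s \cdot \mathrm{Vol}(g)^{1/2}$. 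Since $Y_{[g(t)]} \le {\mathcal Y}(M) < 0$ (so in particular $g(t)$ is never of positive scalar curvature, whence $\hat s_{g(t)} \le 0$ and the previous bound applies), I conclude $\hat s_{g(t)}\, V^{1/2} \le Y_{[g(t)]} \le {\mathcal Y}(M)$, that is, $\hat s_{g(t)} \le {\mathcal Y}(M)\, V^{-1/2} =: -c < 0$ for all $t$. This is exactly condition (\ref{mini-bound}).

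With all hypotheses verified, Lemma \ref{FZZ-prop} immediately gives (\ref{fzz-ricci-011}). I expect the main obstacle to be the input ${\mathcal Y}(M)<0$: it rests on the persistence of the monopole class of $X$ under connected sum with the $b^+=0$ manifold $N$ and on LeBrun's Seiberg-Witten estimate for the Yamabe invariant, which is the substantive analytic ingredient. By contrast, the passage from ${\mathcal Y}(M)<0$ to the uniform bound (\ref{mini-bound}) is soft and, crucially, uses only control of the scalar curvature together with volume preservation, never the full Riemann tensor, which is precisely why the statement holds for merely quasi-non-singular rather than non-singular solutions.
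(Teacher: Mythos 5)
Your proposal follows the paper's proof exactly: it establishes ${\mathcal Y}(M)<0$ via the Seiberg--Witten monopole-class/LeBrun curvature estimate (the paper simply cites \cite{ism} for the bound ${\mathcal Y}(M)\leq -4\pi\sqrt{2(2\chi(X)+3\tau(X))}$), then deduces the uniform bound $\hat{s}_{g(t)}\leq {\mathcal Y}(M)/\sqrt{vol_{g(0)}}<0$ using volume preservation of the normalized flow and the test-metric estimate $Y_{[g]}\geq \hat{s}_{g}\,\mathrm{Vol}(g)^{1/2}$, and finally applies Lemma \ref{FZZ-prop} --- the same three steps, in the same order. The only caveat is that your aside that the induced monopole class on $M$ ``has square equal to $c_1^2(\Gamma_X)$'' oversimplifies the actual argument in \cite{ism} (which works with a family of monopole classes differing by sign choices on the $N$-summand and estimates the squares of their self-dual parts), but since the paper itself black-boxes this step to \cite{ism}, this does not affect the correctness of your overall argument.
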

For completeness we sketch the main steps of the proof. Under the assumptions in the proposition, we are able to prove that the Yamabe invariant of the connected sum $M:=X \# N$ satisfies the following bound (\cite{ism}): 
\begin{eqnarray}\label{yama-1}
{\mathcal Y}(M) \leq -4{\pi}\sqrt{2(2 \chi(X) + 3\tau(X))} < 0. 
\end{eqnarray}
On the other hand, it was also showed in \cite{ism} that any solution to the normalized Ricci flow on any closed $4-$manifold $Z$ with negative Yamabe invariant must satisfy
\begin{eqnarray*}
\hat{s}_{g(t)}:=\min_{x \in Z}{s}_{g(t)}(x) \leq  \frac{{\mathcal Y}(Z)}{\sqrt{vol_{g(0)}}} < 0. 
\end{eqnarray*}
This bound and (\ref{yama-1}) tell us that any quasi-non-singular solution to the normalized Ricci flow on the connected sum $M:=X \# N$  must satisfy the bound (\ref{mini-bound}). This observation and Lemma \ref{FZZ-prop} imply Proposition \ref{key-prop}. \par
We are now in the position to prove a slight refinement of Theorem \ref{ricci-ob-1}. Notice that the bound (\ref{ob-N-Ricci-2}) below is slightly stronger than the bound (\ref{ob-N-Ricci-1}). \par
\begin{thm}\label{ricci-ob-3}
Let $X$ be a closed almost-complex $4-$manifold with ${b}^+(X) \geq 2$. Assume that $X$ has a non-trivial integer valued Seiberg-Witten invariant $SW_{X}(\Gamma_{X}) \not=0$, where $\Gamma_{X}$ is the spin${}^c$-structure compatible with the almost-complex structure. Let $N$ be a closed oriented smooth $4-$manifold with $b^{+}(N)=0$. Then the following hold:  
\begin{itemize}
\item [ 1. ] Suppose that $2\chi(X) + 3 \tau(X) \leq 0$. If  $4b_{1}(N) + {b}^{-}(N) \not=0$ holds, then there is no quasi-non-singular solution to the normalized Ricci flow on a connected sum $M:=X \# N$  satisfying (\ref{mini-bound}). In particular, there does not exist a non-singular solution to the normalized Ricci flow.
\item [ 2. ] Suppose that  $2\chi(X) + 3 \tau(X) > 0$. Assume moreover that $N$ is not an integral homology $4-$sphere whose fundamental group has no non-trivial finite quotient. Then, there do not exist quasi-non-singular solutions to the normalized Ricci flow  on the connected sum $M:=X \# N$ if the following holds:
\begin{eqnarray}\label{ob-N-Ricci-2}
12b_{1}(N) + 3{b}^{-}(N) \geq 2\chi(X) + 3 \tau(X). 
\end{eqnarray}
In particular, under this condition, there does not exist a non-singular solution to the normalized Ricci flow. 
\end{itemize}
\end{thm}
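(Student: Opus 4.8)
The plan is to argue by contradiction in each part, converting the existence of a solution into a topological inequality for $2\chi(M)+3\tau(M)$ that clashes with its actual value. First I would record the connected-sum bookkeeping. Since $\chi(M)=\chi(X)+\chi(N)-2$ and $\tau(M)=\tau(X)+\tau(N)$, while $b^+(N)=0$ forces $\tau(N)=-b^-(N)$ and $\chi(N)=2-2b_1(N)+b^-(N)$, one gets
\[
(2\chi+3\tau)(M)=\bigl(2\chi(X)+3\tau(X)\bigr)-\bigl(4b_1(N)+b^-(N)\bigr),
\]
which is precisely what makes the combinations $4b_1(N)+b^-(N)$ and $12b_1(N)+3b^-(N)$ appear in the statement. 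I would then note that, because $b^+(N)=0$, the spin${}^c$-structure $\Gamma_X$ with $SW_X(\Gamma_X)\neq 0$ gives rise to a monopole class $c$ on $M=X\#N$ with $c^2=2\chi(X)+3\tau(X)$, so that the Seiberg--Witten curvature estimates of \cite{ism} apply to every metric $g(t)$ along the flow.

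The analytic heart is common to both parts. Suppose a quasi-non-singular solution $\{g(t)\}$ exists. In Part~2, where $2\chi(X)+3\tau(X)>0$, Proposition~\ref{key-prop} applies verbatim and yields $\int_m^{m+1}\int_M|\ro_{g(t)}|^2\,d\mu_{g(t)}\,dt\to 0$; in Part~1 the Yamabe bound (\ref{yama-1}) is vacuous once $2\chi(X)+3\tau(X)\le 0$, which is exactly why (\ref{mini-bound}) must be assumed there, and granting it Lemma~\ref{FZZ-prop} gives the same conclusion. I would feed this into the Gauss--Bonnet/signature identity
\[
(2\chi+3\tau)(M)=\frac{1}{4\pi^2}\int_M\Bigl(\tfrac{1}{24}s_{g(t)}^2+2|W^+_{g(t)}|^2-\tfrac12|\ro_{g(t)}|^2\Bigr)\,d\mu_{g(t)},
\]
valid for each fixed $t$; averaging over $t\in[m,m+1]$ (the left side being constant) and letting $m\to\infty$ annihilates the trace-free Ricci term in the limit.

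For Part~1 this is already decisive: the remaining integrand is non-negative, so $(2\chi+3\tau)(M)\ge 0$, whereas the identity of the first paragraph together with $2\chi(X)+3\tau(X)\le 0$ and $4b_1(N)+b^-(N)>0$ forces $(2\chi+3\tau)(M)<0$, a contradiction. For Part~2 I would instead use the refined Seiberg--Witten estimate, which bounds $\frac{1}{4\pi^2}\int_M(\tfrac{1}{24}s^2+2|W^+|^2)\,d\mu$ from below by $\tfrac23\bigl(2\chi(X)+3\tau(X)\bigr)$ for each $g(t)$ (using $(c^+)^2\ge c^2=2\chi(X)+3\tau(X)>0$); averaging and discarding the vanishing Ricci term gives the necessary condition $(2\chi+3\tau)(M)\ge\tfrac23\bigl(2\chi(X)+3\tau(X)\bigr)$, equivalently $2\chi(X)+3\tau(X)\ge 12b_1(N)+3b^-(N)$. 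The strict form of this recovers Theorem~\ref{ricci-ob-1}, and under the non-strict hypothesis (\ref{ob-N-Ricci-2}) it leaves only the borderline case of equality.

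The main obstacle is precisely this equality $2\chi(X)+3\tau(X)=12b_1(N)+3b^-(N)$, in which every estimate above is saturated: asymptotically $\ro_{g(t)}\to 0$ and the refined estimate becomes sharp. The equality discussion in Seiberg--Witten theory then forces the (rescaled) flow to approach a K\"ahler--Einstein orbifold limit realizing the extremal curvature, a rigidity that should be incompatible with $M$ carrying the non-trivial defect $4b_1(N)+b^-(N)>0$ coming from a genuine summand $N$ with $b^+(N)=0$. This is where the hypothesis that $N$ is not an integral homology sphere whose fundamental group has no non-trivial finite quotient enters, excluding the one degenerate configuration in which the limit could persist --- so that either a nonzero $b^-(N)$ exhibits $M$ as a non-minimal blow-up, contradicting minimality of the K\"ahler--Einstein limit, or $\pi_1(M)$ admits a non-trivial finite cover on which the rigid limit is defeated. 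Pinning down this dichotomy, and confirming that the saturated case really does produce the rigid limit in the merely quasi-non-singular setting, is the delicate step. The ``in particular'' assertions for non-singular solutions then follow, since any non-singular solution is quasi-non-singular and the monopole class on $M$ forces the scalar-curvature behaviour (\ref{mini-bound}) needed to invoke them.
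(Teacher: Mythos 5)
Your Part 1 and the general flow machinery are essentially the paper's own argument: the connected-sum bookkeeping identity, the use of Lemma \ref{FZZ-prop}/Proposition \ref{key-prop} to kill the trace-free Ricci term, and the averaged Gauss--Bonnet formula giving $(2\chi+3\tau)(M)\geq 0$, contradicting $(2\chi+3\tau)(M)<0$. The genuine gap is in Part 2, at exactly the point you flag yourself: averaging the non-strict per-metric estimate only yields $12b_1(N)+3b^-(N)\leq 2\chi(X)+3\tau(X)$, and the hypothesis (\ref{ob-N-Ricci-2}) leaves the borderline equality case open. Your proposed resolution --- that saturation forces the (rescaled) flow to approach a K\"ahler--Einstein orbifold limit which is then contradicted by the summand $N$ --- is not an argument that can be completed in this setting. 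A quasi-non-singular solution carries only a uniform scalar curvature bound; there is no injectivity radius control, no full curvature bound, and hence no compactness theory from which to extract any limit at all, let alone identify it as K\"ahler--Einstein. Even the decay $\int_m^{m+1}\int_M|\mathring{r}|^2\,d\mu\,dt\to 0$ is an integral statement along time intervals, far too weak to produce the rigidity you invoke.

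The paper's treatment of the equality case is different in kind and entirely static: the hypothesis on $N$ is spent \emph{before} the flow enters, to upgrade the estimate metric-by-metric. Equality in (\ref{mono-1}) for a fixed metric $g$ forces $g$ to be K\"ahler--Einstein with negative scalar curvature, hence $M=X\# N$ to be a \emph{minimal} K\"ahler surface \cite{aubyam, yau}; Theorem 5.4 of \cite{kot} then says that a minimal K\"ahler surface with $b^+>1$ admitting a connected-sum decomposition $X\# N$ forces $N$ to be an integral homology $4$-sphere whose fundamental group has no non-trivial finite quotient --- excluded by hypothesis. Therefore the \emph{strict} inequality (\ref{mono-2}) holds for every Riemannian metric on $M$, in particular for every $g(t)$, and your own averaging argument then delivers the strict conclusion $12b_1(N)+3b^-(N)<2\chi(X)+3\tau(X)$, contradicting (\ref{ob-N-Ricci-2}) with no borderline case remaining. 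So your closing dichotomy (non-minimality from $b^-(N)\neq 0$ versus finite covers of $\pi_1$) correctly identifies where the homology-sphere hypothesis must act, but it belongs inside a per-metric Seiberg--Witten equality analysis via Kotschick's decomposition theorem, not inside an asymptotic analysis of the flow; as written, the ``delicate step'' you defer is precisely the missing proof.
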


\begin{proof}
 It is known \cite{fz-1, ism} that the existence of a quasi-non-singular solution to the normalized Ricci flow on a $4-$manifold $Y$  satisfying (\ref{mini-bound}) forces the bound $2 \chi(Y) \geq 3|\tau(Y)|$. Therefore, if there is a quasi-non-singular solution on $M$ satisfying (\ref{mini-bound}), then $2 \chi(M) \geq 3|\tau(M)|$ must hold. Since $2 \chi(M) + 3\tau(M)=2 \chi(X) + 3\tau(X)-4b_{1}(N) -{b}^{-}(N)$, we have in particular
\begin{eqnarray*}
2 \chi(X) + 3\tau(X) \geq  4b_{1}(N) +{b}^{-}(N).
\end{eqnarray*}
Since we assume that $2 \chi(X) + 3\tau(X) \leq 0$, this bound forces $4b_{1}(N) +{b}^{-}(N)=0$. However, this contradicts $4b_{1}(N) + {b}^{-}(N) \not=0$. Hence, we showed Case 1. \par
In what follows, we  prove the second statement. It is shown in \cite{ism} that any Riemannian metric $g$ on the connected sum $M:=X \# N$ satisfies the following bound: 
\begin{eqnarray}\label{mono-1}
\frac{1}{4{\pi}^2}{\int}_{M}\Big(2|W^{+}_{g}|^2+\frac{{s}^2_{g}}{24}\Big) d{\mu}_{g} \geq \frac{2}{3} c^2_{1}({X}) = \frac{2}{3}\Big( 2 \chi(X) + 3\tau(X)\Big), 
\end{eqnarray}
where $W^{+}_{g}, s_g$ are the self-dual Weyl curvature and the scalar curvature of $g.$  Moreover, the equality holds if and only if the metric $g$ is a K{\"{a}}hler-Einstein metric with negative scalar curvature. \par
Now, suppose moreover that $N$ is not an integral homology 4-sphere whose fundamental group has no non-trivial finite quotient. Then the equality cannot occur in (\ref{mono-1}). We shall prove this as follows. As was already mentioned above, notice that the equality case in (\ref{mono-1}) forces the metric $g$ to be a K{\"{a}}hler-Einstein metric with negative scalar curvature. In particular, this forces  the connected sum $M:=X \# N$ to be a minimal K{\"{a}}hler surface \cite{aubyam, yau}. On the other hand, Theorem 5.4 in \cite{kot} tells us that, if a minimal K{\"{a}}hler surface with $b^+ > 1$ admits the connected sum decomposition $X \# N$, then $N$ must be an integral homology $4-$sphere whose fundamental group has no non-trivial finite quotient. Therefore, we can  conclude that the equality cannot occur in (\ref{mono-1}). Hence we have the  following strict inequality which holds for any Riemannian metric $g$ on $M$: 
\begin{eqnarray}\label{mono-2}
\frac{1}{4{\pi}^2}{\int}_{M}\Big(2|W^{+}_{g}|^2+\frac{{s}^2_{g}}{24}\Big) d{\mu}_{g} > \frac{2}{3}\Big( 2 \chi(X) + 3\tau(X)\Big). 
\end{eqnarray}
Now, suppose that there is a quasi-non-singular solution $\{g(t)\}$ to the normalized Ricci flow on $M$. Then, we have the following Gauss-Bonnet like formula:
\begin{eqnarray}\label{GB}
2\chi(M) + 3\tau(M) = \frac{1}{4{\pi}^2}{\int}_{M}\Big(2|W^{+}_{g(t)}|^2+\frac{{s}^2_{g(t)}}{24}-\frac{|\stackrel{\circ}{r}_{g(t)}|^2}{2} \Big) d{\mu}_{g(t)}.  
\end{eqnarray}
On the other hand, notice that Proposition \ref{key-prop} tells us that the quasi-non-singular solution $\{g(t)\}$ must satisfy (\ref{fzz-ricci-011}). By the formula (\ref{GB}) and (\ref{fzz-ricci-011}), we have the following
\begin{eqnarray*}
2\chi(M) + 3\tau(M) &=& {\int}^{m+1}_{m} \Big(2\chi(M) + 3\tau(M) \Big)dt \\
&=& \frac{1}{4{\pi}^2}{\int}^{m+1}_{m} {\int}_{M}\Big(2|W^{+}_{g(t)}|^2+\frac{{s}^2_{g(t)}}{24}-\frac{|\stackrel{\circ}{r}_{g(t)}|^2}{2} \Big) d{\mu}_{g(t)}dt \\
&\geq & \liminf_{m \longrightarrow \infty}\frac{1}{4{\pi}^2}{\int}^{m+1}_{m} {\int}_{M}\Big(2|W^{+}_{g(t)}|^2+\frac{{s}^2_{g(t)}}{24}-\frac{|\stackrel{\circ}{r}_{g(t)}|^2}{2} \Big) d{\mu}_{g(t)}dt \\
&=& \liminf_{m \longrightarrow \infty}\frac{1}{4{\pi}^2}{\int}^{m+1}_{m} {\int}_{M}\Big(2|W^{+}_{g(t)}|^2+\frac{{s}^2_{g(t)}}{24}\Big) d{\mu}_{g(t)}dt.  
\end{eqnarray*}
This and the bound (\ref{mono-2}) imply that 
\begin{eqnarray*}
2\chi(M) + 3\tau(M) &\geq&  \liminf_{m \longrightarrow \infty}\frac{1}{4{\pi}^2}{\int}^{m+1}_{m} {\int}_{M}\Big(2|W^{+}_{g(t)}|^2+\frac{{s}^2_{g(t)}}{24}\Big) d{\mu}_{g(t)}dt \\ 
 &>& \liminf_{m \longrightarrow \infty}\frac{2}{3} {\int}^{m+1}_{m} \Big( 2 \chi(X) + 3 \tau(X) \Big) dt \\
 &=& \frac{2}{3} \Big( 2 \chi(X) + 3 \tau(X) \Big). 
\end{eqnarray*}
Since we have $2\chi(M) + 3\tau(M) = 2\chi(X) + 3 \tau(X) - 4b_{1}(N)- {b}^{-}(N)$, we get 
\begin{eqnarray*}
 2\chi(X) + 3 \tau(X) - \Big(4b_{1}(N) + {b}^{-}(N)\Big)  >  \frac{2}{3}\Big( 2 \chi(X) + 3 \tau(X) \Big). 
\end{eqnarray*}
Namely, 
\begin{eqnarray*}
12b_{1}(N) + 3{b}_{2}(N) < 2\chi(X) + 3 \tau(X). 
\end{eqnarray*}
By contraposition, we are able to obtain the desired result. 
\end{proof}

As a corollary of Theorem \ref{ricci-ob-3}, we get
\begin{cor}\label{cor}
Let $X$ be a closed symplectic $4-$manifold with $b^+(X) \geq 2$. Let $Y_{d}$ be any rational homology $4-$sphere with $\pi(Y_{d})=\ZZ_{d}$ and consider the following connected sum:
\begin{eqnarray*}
M:=X \# Y_{d} \# k \overline{{\mathbb C}{P}^2}, 
\end{eqnarray*}
\item [ 1. ] Suppose that $2\chi(X) + 3 \tau(X) \leq 0$. For any $k> 0$, there is no quasi-non-singular solution to the normalized Ricci flow on $M$  satisfying (\ref{mini-bound}). In particular, there is no non-singular solution.
\item [ 2. ] Suppose that  $2\chi(X) + 3 \tau(X) > 0$. If 
\begin{eqnarray*}
k \geq \frac{1}{3} \Big(2\chi(X) + 3 \tau(X)\Big),  
\end{eqnarray*}
then, there are no quasi-non-singular solutions to the normalized Ricci flow on $M$. In particular, there is no non-singular solution. 
\end{cor}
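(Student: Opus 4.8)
The plan is to deduce the corollary directly from Theorem \ref{ricci-ob-3} by taking $N := Y_d \# k\cpb$, so that $M = X \# N$. First I would verify that the factor $X$ satisfies all the hypotheses imposed on the ``$X$'' of that theorem. Since $X$ is symplectic it carries an almost-complex structure $J$ compatible with the symplectic form, and Taubes' non-vanishing theorem gives $SW_X(\Gamma_X) = \pm 1 \neq 0$ for the canonical spin${}^c$-structure $\Gamma_X$ determined by $J$; combined with the standing assumption $b^+(X) \geq 2$, this shows $X$ is an admissible factor.

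Next I would record the invariants of $N = Y_d \# k\cpb$. As $Y_d$ is a rational homology $4$-sphere, $b_1(Y_d) = b^+(Y_d) = b^-(Y_d) = 0$, while each copy of $\cpb$ contributes $1$ to $b^-$ and nothing to $b_1$ or $b^+$. By additivity of the Betti numbers and of the intersection form under connected sum, $b_1(N) = 0$, $b^+(N) = 0$ and $b^-(N) = k$. In particular $b^+(N) = 0$, so $N$ is an admissible factor for Theorem \ref{ricci-ob-3}.

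With these computations the two cases follow at once. For the first statement, where $2\chi(X) + 3\tau(X) \leq 0$, one has $4b_1(N) + b^-(N) = k$, which is nonzero precisely when $k > 0$; Case 1 of Theorem \ref{ricci-ob-3} then rules out any quasi-non-singular solution satisfying (\ref{mini-bound}), and hence any non-singular solution. For the second statement, where $2\chi(X) + 3\tau(X) > 0$, the hypothesis $k \geq \frac13(2\chi(X)+3\tau(X))$ forces $k \geq 1$, so $b^-(N) = k > 0$ and $N$ is not even a rational homology $4$-sphere, a fortiori not an integral homology $4$-sphere; thus the exceptional case excluded in Theorem \ref{ricci-ob-3} does not arise. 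Moreover $12b_1(N) + 3b^-(N) = 3k$, and $3k \geq 2\chi(X)+3\tau(X)$ is exactly the stated hypothesis, so Case 2 of Theorem \ref{ricci-ob-3} applies and again excludes quasi-non-singular, hence non-singular, solutions.

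The argument is therefore essentially bookkeeping once Theorem \ref{ricci-ob-3} is in hand; the only genuine input is Taubes' theorem, which converts the symplectic hypothesis on $X$ into the Seiberg-Witten condition the theorem requires. I do not anticipate any serious obstacle: the connected-sum arithmetic for $b_1, b^+, b^-$ and the observation that $N$ fails to be an integral homology sphere are both routine, and no new analysis of the Ricci flow is needed beyond what Theorem \ref{ricci-ob-3} already supplies.
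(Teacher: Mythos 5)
Your proposal is correct and is precisely the derivation the paper intends: the corollary is stated as an immediate consequence of Theorem \ref{ricci-ob-3}, obtained by taking $N = Y_d \# k\,\overline{{\mathbb C}{\mathbb P}}{}^2$ (so $b_1(N)=b^+(N)=0$, $b^-(N)=k$) and invoking Taubes' nonvanishing theorem to supply the Seiberg--Witten hypothesis for the symplectic factor $X$. Your bookkeeping in both cases, including the observation that $k\geq 1$ rules out the integral-homology-sphere exception in Case 2, matches the paper's (unwritten but evidently intended) argument.
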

Notice that Ue \cite{ue} constructs, for any $d> 1$, a spin rational homology $4-$sphere with $\pi(Y_{d})=\ZZ_{d}$ whose universal cover is $\#(d-1) S^2 \times S^2$. We use Corollary \ref{cor} to prove Theorems \ref{main-A} and \ref{main-B} in Section \ref{sec-4} below. \par
In the case when the manifolds decompose as connected sum of  manifolds with $b^+>0$, the first author has a different type of obstructions
\begin{thm}[\cite{ism}]\label{ricci-ob-2}
For $i= 1,2,3,4$, let $X_{i}$ be a closed almost-complex $4-$manifold whose integer valued Seiberg-Witten invariant satisfies $SW_{X_{i}}(\Gamma_{X_{i}}) \equiv 1 \ (\bmod \ 2)$, where $\Gamma_{X_{i}}$ is the spin${}^c$-structure compatible with the almost-complex structure. Assume that the following conditions are satisfied:
\begin{itemize}
\item $b_{1}(X_{i})=0$, \ $b^{+}(X_{i}) \equiv 3 \ (\bmod \ 4)$, \ $\displaystyle\sum^{4}_{i=1}b^{+}(X_{i}) \equiv 4 \ (\bmod \ 8)$, 
\item $\sum^j_{i=1}\Big( 2\chi(X_{i}) + 3 \tau(X_{i}) \Big) > 0$, where $j=2,3,4$. 
\end{itemize} 
Let $N$ be a closed oriented smooth $4-$manifold with $b^{+}(N)=0$. Then, for $j=2,3,4$, there is no  quasi-non-singular solution to the normalized Ricci flow on the connected sum $M:=\Big(\#^{j}_{i=1}{X}_{i} \Big) \# N$ if the following holds:
\begin{eqnarray*}
12(j-1)+\Big( 12b_{1}(N) + 3{b}^{-}(N) \Big) \geq \sum^j_{i=1}\Big( 2\chi(X_{i}) + 3 \tau(X_{i}) \Big). 
\end{eqnarray*}
In particular, under this condition, there does not exist a non-singular solution to the normalized Ricci flow on $M$. 
\end{thm}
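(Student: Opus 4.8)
The plan is to run the same argument that established Theorem~\ref{ricci-ob-3}, with one essential modification: when two or more of the summands $X_i$ have $b^+>0$, the ordinary integer-valued Seiberg--Witten invariant of $M=(\#_{i=1}^j X_i)\#N$ vanishes, so the curvature estimate of the type (\ref{mono-1}) is no longer supplied by the classical theory. To recover it I would invoke the Bauer--Furuta stable cohomotopy refinement of the Seiberg--Witten invariant, whose behavior under connected sum is governed by a smash-product (gluing) formula. The mod~$2$ hypotheses $SW_{X_i}(\Gamma_{X_i})\equiv 1$, the congruences $b^+(X_i)\equiv 3\ (\mathrm{mod}\ 4)$ and $\sum_{i=1}^{4}b^+(X_i)\equiv 4\ (\mathrm{mod}\ 8)$, and the restriction $j\le 4$ are precisely the arithmetic conditions under which Bauer's product of the individual refined invariants represents a nonzero element of the relevant stable stem. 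Under these conditions the stable cohomotopy invariant of the connected sum $M$ is nontrivial, and the spin${}^c$ class built from the $\Gamma_{X_i}$ is a monopole class on $M$.

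From the non-vanishing of this refined invariant I would derive, exactly as in the passage from (\ref{mono-1}) to (\ref{mono-2}), the curvature estimate valid for every Riemannian metric $g$ on $M$,
\begin{eqnarray*}
\frac{1}{4\pi^2}\int_M\Big(2|W^+_g|^2+\frac{s_g^2}{24}\Big)\,d\mu_g \ \geq\ \frac{2}{3}\sum_{i=1}^{j}\Big(2\chi(X_i)+3\tau(X_i)\Big),
\end{eqnarray*}
the right-hand side now being the sum $\frac{2}{3}\sum_i c_1^2(X_i)$ coming from the monopole class, rather than a single $c_1^2$. I expect this inequality to be strict here: equality would force $M$ to be a minimal Kähler--Einstein surface by the LeBrun-type rigidity already used for (\ref{mono-2}), but since each $b^+(X_i)\geq 3>0$, the manifold $M$ is a nontrivial connected sum of summands both of positive $b^+$ and so cannot be minimal Kähler. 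The same estimate, via its $s_g^2$ term, also yields $\mathcal{Y}(M)\leq -4\pi\sqrt{2\sum_{i=1}^{j}(2\chi(X_i)+3\tau(X_i))}<0$, using the hypothesis $\sum_{i=1}^{j}(2\chi(X_i)+3\tau(X_i))>0$.

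The remainder is the dynamical bookkeeping, identical to Theorem~\ref{ricci-ob-3}. Negativity of $\mathcal{Y}(M)$ forces any quasi-non-singular solution $\{g(t)\}$ to satisfy the scalar bound (\ref{mini-bound}), whence Lemma~\ref{FZZ-prop} and Proposition~\ref{key-prop} give $\int_m^{m+1}\int_M|\ro_{g(t)}|^2\,d\mu_{g(t)}\,dt\to 0$ as $m\to\infty$. Feeding this into the Gauss--Bonnet formula (\ref{GB}), integrating over $[m,m+1]$, and taking $\liminf_{m\to\infty}$ as before, the trace-free Ricci term drops out and the strict curvature estimate yields $2\chi(M)+3\tau(M)>\frac{2}{3}\sum_{i=1}^{j}(2\chi(X_i)+3\tau(X_i))$. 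I would then substitute the additivity computation
\begin{eqnarray*}
2\chi(M)+3\tau(M)=\sum_{i=1}^{j}\Big(2\chi(X_i)+3\tau(X_i)\Big)-4(j-1)-4b_1(N)-b^-(N),
\end{eqnarray*}
which follows because $\chi$ loses $2$ at each of the $j$ connected-sum gluings while $\tau$ is additive and $b^+(N)=0$. Rearranging gives $\sum_{i=1}^{j}(2\chi(X_i)+3\tau(X_i))>12(j-1)+12b_1(N)+3b^-(N)$, and the theorem follows by contraposition; the $12(j-1)$ term is exactly the accumulated cost of the extra connected-sum gluings.

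The hard part, and the only genuinely new input beyond Theorem~\ref{ricci-ob-3}, is the first step: proving that the stable cohomotopy Seiberg--Witten invariant survives the connected sum and produces a monopole class with $(\mathfrak{a}^+)^2\geq \sum_i c_1^2(X_i)$. This is where the mod~$2$ and $b^+$ congruence hypotheses and the bound $j\le 4$ enter in an essential way, since they are dictated by the structure of the stable stems into which Bauer's product maps; once the estimate is in hand, every downstream step is the same curvature-integral argument as in the single-summand case.
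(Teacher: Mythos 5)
Your proposal is correct and takes essentially the same approach as the paper's source: Theorem \ref{ricci-ob-2} is quoted here from \cite{ism} without a reproduced proof, and the argument there is precisely your outline --- Bauer--Furuta nonvanishing for connected sums of up to four summands under the stated mod $2$ and $b^+$ congruence hypotheses, the resulting strict curvature estimate with right-hand side $\frac{2}{3}\sum_{i=1}^{j}\big(2\chi(X_i)+3\tau(X_i)\big)$, and then the same Yamabe-negativity, traceless-Ricci decay, and Gauss--Bonnet bookkeeping used in the proof of Theorem \ref{ricci-ob-3}. Your additivity computation $2\chi(M)+3\tau(M)=\sum_{i=1}^{j}\big(2\chi(X_i)+3\tau(X_i)\big)-4(j-1)-4b_1(N)-b^-(N)$ correctly accounts for the $12(j-1)$ term, so the contraposition matches the stated inequality.
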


%%%%%%%%%%%%%%%%%%%%%%%%%%%%%%%%%%%%%%%%%%%%%%%%%%%%%%%%%%%%%%%%%%%%%%%
\section{Complex surfaces of general type with finite cyclic fundamental group}
%%%%%%%%%%%%%%%%%%%%%%%%%%%%%%%%%%%%%%%%%%%%%%%%%%%%%%%%%%%%%%%%%%%%%%%

We begin this section with a short discussion on the homeomorphism criteria for non-simply connected manifolds.
A remarkable result of Freedman \cite{free} in conjunction
 with results of Donaldson tells us that
 smooth, compact, simply connected, oriented $4$-manifolds are
 classified by their numerical invariants: Euler characteristic
 $\chi$, signature $\tau$ and Stiefel-Whitney class $w_2$. Maybe
 not as well-known are the results involving the classification
 of  non-simply connected  $4$-manifolds (\cite{ha-kr},
 Theorem C):

\begin{thm}[\cite{ha-kr}]\label{ha-kr}
Let $M$ be a smooth, closed, oriented, $4$-manifold with finite
 cyclic fundamental group. Then $M$ is classified up to
 homeomorphism by the fundamental group, the intersection form
 on 
  $H_2(M,\ZZ)/Tors$ and the $w_2$-type. Moreover, any isometry of
 the intersection form can be realized by a homeomorphism.
\end{thm}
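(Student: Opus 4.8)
The plan is to prove this classification by Kreck's modified surgery theory, which is the natural framework when $\pi_1$ is nontrivial but still \emph{good} in the sense of Freedman and Quinn; finite cyclic groups are good, so topological surgery and the topological $s$-cobordism theorem are available. The first step is to organise the manifolds by their $w_2$-type. For an oriented $4$-manifold $M$ with $\pi_1(M)=\ZZ_d$ there are exactly three possibilities: (I) $w_2(\widetilde{M})\neq 0$; (II) $w_2(M)=0$; and (III) $w_2(M)\neq 0$ but $w_2(\widetilde{M})=0$. For each type I would identify the normal $1$-type $\xi\colon B\to BSO$, namely the first stage of the Moore--Postnikov factorisation of the stable normal Gauss map $\nu_M\colon M\to BSO$, and verify that $M$ admits a normal $1$-smoothing, i.e.\ a $2$-connected lift $\bar\nu\colon M\to B$ of $\nu_M$. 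The space $B$ encodes $\pi_1=\ZZ_d$ together with exactly the spin information distinguishing the three types, and the relevant bordism group is then $\Omega_4(B,\xi)$.

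The heart of the argument is a bordism computation. Given two manifolds $M_0,M_1$ of the same $w_2$-type, with fundamental group $\ZZ_d$ and with an isometry $H_2(M_0,\ZZ)/\mathrm{Tors}\cong H_2(M_1,\ZZ)/\mathrm{Tors}$ of intersection forms, I would first note that the torsion subgroup of $H_2$ and the full homological $1$-type are already pinned down by $\pi_1=\ZZ_d$ via the universal coefficient theorem and Poincar{\'e} duality (indeed $H^1(M)=0$ and $\mathrm{Tors}\,H_2(M)\cong \mathrm{Ext}(\ZZ_d,\ZZ)\cong\ZZ_d$), so that the stated data genuinely determine the homology of $M$. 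I would then show that $[M_0,\bar\nu_0]$ and $[M_1,\bar\nu_1]$ coincide in $\Omega_4(B,\xi)$: the point is that this bordism group is detected by the signature together with a residual $\ZZ_2$-type (Arf/Kervaire) and the $\pi_1$-data, all of which are functions of the intersection form and the $w_2$-type. Equal invariants therefore produce a compact normal $B$-bordism $W^5$ from $M_0$ to $M_1$ realising the given isometry.

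Next I would simplify $W$ by surgery. In Kreck's setting, surgery below the middle dimension is unobstructed, so $W$ can be made $2$-connected relative to its boundary. The only remaining obstruction to turning $W$ into an $s$-cobordism is Kreck's middle-dimensional obstruction $\theta(W)$ in the monoid $l_5(\ZZ[\ZZ_d])$. Using the freedom to modify $W$ by interior connected sum with copies of $S^2\times S^3$ and their twisted analogues, together with the matching of intersection forms, I would arrange $\theta(W)=0$, so that $W$ becomes an $s$-cobordism inducing the prescribed isometry on $H_2/\mathrm{Tors}$. Finally, since $\ZZ_d$ is good, the topological $s$-cobordism theorem of Freedman--Quinn yields a homeomorphism $M_0\cong M_1$ compatible with the isometry, which establishes the classification and the realisation statement simultaneously.

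The main obstacle I anticipate is controlling the middle-dimensional term $\theta(W)$: one must show that the algebraic obstruction is exhausted by modifications of $W$ that fix the prescribed boundary data, equivalently that the relevant $l$-monoid acts on bordisms with fixed ends in a way that lets vanishing be arranged once the intersection forms are isometric. A secondary delicate point is the bookkeeping across the three $w_2$-types --- in particular checking in type (III) that the distinction $w_2(M)\neq 0=w_2(\widetilde{M})$ is itself preserved under $B$-bordism --- together with the careful use of Poincar{\'e} duality over $\ZZ[\ZZ_d]$ to confirm that the torsion of $H_2$ contributes no moduli beyond $\pi_1$.
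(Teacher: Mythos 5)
First, note that the paper you were asked to compare against contains \emph{no} proof of this statement: it is quoted verbatim from Hambleton--Kreck \cite{ha-kr} (their Theorem C), so the relevant benchmark is the argument in that cited paper. Your framework --- normal $1$-types sorted by the three $w_2$-types, normal $1$-smoothings, the bordism group $\Omega_4(B,\xi)$, and Kreck's modified surgery --- is indeed the same machinery Hambleton and Kreck use for the first half of their proof. But the decisive step in your outline is exactly the one that fails. In Kreck's theory, a normal $B$-bordism $W^5$ between $M_0$ and $M_1$ carries an obstruction $\theta(W)$ in the monoid $l_5(\ZZ[\ZZ_d])$, which is not a group, and what the theory delivers unaided is only the \emph{stable} classification: $M_0\,\#\,k(S^2\times S^2)\cong M_1\,\#\,k(S^2\times S^2)$ for some $k$. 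Your plan to ``arrange $\theta(W)=0$'' by interior connected sums with $S^2\times S^3$ and matching of intersection forms is precisely what cannot be done in general: those modifications change $\theta(W)$ only within controlled classes, and deciding elementariness of $\theta$ is intractable here. The whole point of \cite{ha-kr} --- visible in its title, ``Cancellation, \dots'' --- is that the passage from stable homeomorphism to homeomorphism is accomplished by a separate, hard \emph{cancellation theorem} for quadratic forms over $\ZZ[\ZZ_d]$, allowing an $S^2\times S^2$ summand to be split off; this uses the specific arithmetic of finite cyclic group rings and has no counterpart in your sketch. Without an argument of this type, your bordism construction proves strictly less than the theorem.

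Two secondary gaps: your assertion that $\Omega_4(B,\xi)$ is ``detected by the signature, a residual Arf invariant, and the $\pi_1$-data, all functions of the intersection form and $w_2$-type'' is stated, not proved, and is delicate --- for the spin-related types the reduced bordism of $B\ZZ_d$ has torsion that must either be computed away or absorbed by changing the normal $1$-smoothing, which Hambleton--Kreck handle by explicit bordism computations and by exploiting the action of fibre-homotopy self-equivalences of $\xi$. Relatedly, the final clause of the theorem (every isometry of the intersection form is \emph{realized} by a homeomorphism) does not follow automatically from producing some homeomorphism between $M_0$ and $M_1$; one must track which isometries arise as one varies the smoothings and the bordism, and this realization statement requires its own argument. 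Your homological bookkeeping ($H^1(M)=0$, $\mathrm{Tors}\,H_2(M)\cong\ZZ_d$) is correct but orthogonal to where the difficulty lies.
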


In contrast with the simply connected manifolds, there are three 
 $w_2$-types that can be exhibited: (I) $w_2(\widetilde M) \neq 0$,
 (II) $w_2(M)=0$, and (III) $w_2(\widetilde M)=0,$ but $w_2(M)\neq 0,$ where $\wt M$ is the universal cover of $M.$

Using Donaldson's and Minkowski-Hasse's classification of the
 intersection form we can reformulate this theorem on an easier
 form: 

{\bf Equivalently}:
A smooth, closed, oriented $4$-manifold with finite cyclic
 fundamental group and indefinite intersection form is
 classified up to homeomorphism by the fundamental group, the
 numbers $b^\pm,$ the parity of the intersection form and
 the $w_2$-type.

Although not any group can be exhibited as the fundamental group of a complex surface, in the case of finite cyclic groups there are constructions which yield such complex manifolds. Starting from iterated cyclic covers of  $\bcp^1\times\bcp^1$ the second author \cite{sio} introduced a method of constructing such manifolds for which the properties are well understood:

\begin{prop}[\cite{sio}]\label{sio-thm}
Given any integer $d\geq2,$ there are infinitely many complex surfaces of general type, $\{X_i\}_{i\in \NN},$ with ample canonical line bundle, such that their fundamental groups $\pi_1(X_i)=\ZZ_d,$ they have $w_2-type ~I,$ odd intersection form and satisfy $c_1^2(X_i)<5\chi_h(X_i).$ Moreover, the universal cover of $X_i$, $\wt{X_i},$ is  not diffeomorphic to connected sums of $\bcp^2$'s and $\cpb$'s, but after taking the connected sum with one copy of $\bcp^2$ the manifold $\wt{X_i}\#\bcp^2$ is diffeomorphic to $n\bcp^2\#m\cpb$ for appropriate $n,m.$
\end{prop}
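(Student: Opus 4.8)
The plan is to construct the surfaces $\{X_i\}$ explicitly as iterated cyclic branched covers, following the method referenced from \cite{sio}, and then to verify each asserted property in turn: the fundamental group, the $w_2$-type, the parity of the intersection form, the Chern number inequality, and finally the smooth non-decomposability of the universal cover together with its stabilization after one $\bcp^2$ summand. I would begin from $\bcp^1\times\bcp^1$ and take a $d$-fold cyclic cover branched along a smooth divisor in a suitable linear system $|aH_1+bH_2|$, chosen so that the resulting surface is of general type with ample canonical bundle; iterating this construction with varying parameters produces the infinite family. Controlling the branch locus via a sufficiently positive and divisible class guarantees, by the standard formula for the canonical bundle of a cyclic cover, that $K_{X_i}$ is ample, and lets one compute $c_1^2$ and $\chi_h$ as explicit polynomials in the parameters, from which $c_1^2(X_i)<5\chi_h(X_i)$ is read off directly.

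The fundamental group computation is the heart of the \emph{topological} claims: for a $d$-fold cyclic cover branched over a connected, sufficiently ample smooth divisor one expects $\pi_1(X_i)=\ZZ_d$ by a Lefschetz-type / Nori-type argument (the base $\bcp^1\times\bcp^1$ is simply connected, so the cover's fundamental group is cyclic of order dividing $d$, and one rules out collapse by a transversality or monodromy argument showing the generator has exact order $d$). The $w_2$-type and parity are then extracted from the intersection form of $X_i$: I would compute $H_2(X_i,\ZZ)/\mathrm{Tors}$ and show the form is odd (so that, combined with $w_2(\widetilde X_i)\neq 0$, we land in $w_2$-type I), most cleanly by exhibiting a class of odd self-intersection, e.g.\ the pullback of a hyperplane or the preimage of the branch divisor.

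For the universal cover $\widetilde{X_i}$, the key tool is Seiberg-Witten theory rather than geometry: since $X_i$ is of general type with $c_1^2>0$ and $b^+>1$, its SW basic classes are non-trivial, and $\widetilde{X_i}$ (a $d$-fold cover, hence again of general type with $b^+>1$) therefore also carries a non-vanishing SW invariant. A connected sum of $\bcp^2$'s and $\cpb$'s has vanishing SW invariants whenever $b^+\geq 2$ (both summands split off a positive piece), so $\widetilde{X_i}$ cannot be diffeomorphic to such a connected sum. The stabilization statement $\widetilde{X_i}\#\bcp^2\cong n\bcp^2\#m\cpb$ is the one place where I expect the real work to lie: it is a Wall-type / Mandelbaum–Moishezon \emph{dissolving} result asserting that these simply connected non-spin manifolds become standard after a single $\bcp^2$ connected sum. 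I would invoke the known dissolving theorems for cyclic covers of rational surfaces (the universal covers here are exactly such covers of $\bcp^1\times\bcp^1$), matching $n=b_2^+(\widetilde{X_i})$ and $m=b_2^-(\widetilde{X_i})$ by counting Betti numbers and fixing orientation via the signature; Freedman's theorem gives the homeomorphism, and the dissolving result upgrades it to a diffeomorphism. The main obstacle is precisely establishing this diffeomorphism after stabilization for the specific covers at hand, since smooth classification results of this kind are delicate and must be cited or reproved carefully for the explicit branched-cover models rather than assumed in general.
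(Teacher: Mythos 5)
There is a genuine gap, and it sits at the heart of the topological claims: your fundamental group computation is backwards. A $d$-fold cyclic cover of the simply connected surface $\bcp^1\times\bcp^1$ branched along a \emph{smooth connected} ample divisor is itself simply connected, not a manifold with $\pi_1=\ZZ_d$: the fundamental group of the complement of such a branch divisor is abelian and generated by the meridian (Nori/Zariski), and the meridian dies in the branched cover once the ramification divisor is glued back in --- this is the classical computation that makes cyclic covers of $\CP^2$ along smooth curves simply connected. So there is no ``collapse to be ruled out by a transversality or monodromy argument''; the group collapses entirely, and no choice of smooth connected branch data makes the cover itself have $\pi_1=\ZZ_d$. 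The construction of \cite{sio} that the paper quotes (the paper offers no proof, only the citation) differs in exactly this respect: the iterated cyclic branched covers of $\bcp^1\times\bcp^1$ are the \emph{universal covers} $\wt{X_i}$ --- simply connected, consistently with the above --- and the surfaces $X_i$ are quotients of these covers by \emph{free} $\ZZ_d$-actions, obtained by choosing the branch divisors invariant under a suitable $\ZZ_d$-action on $\bcp^1\times\bcp^1$ and away from its fixed locus, and then composing the lifted action with deck transformations to make it free. Only then does one get $\pi_1(X_i)=\ZZ_d$ (because the cover is simply connected), with ampleness of the canonical bundle descending to the quotient and $c_1^2,\chi_h$ dividing by $d$. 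This equivariant choice of branch data is the missing idea, and its absence surfaces as an internal inconsistency in your write-up: in the stabilization paragraph you assert that ``the universal covers here are exactly such covers of $\bcp^1\times\bcp^1$,'' which is true of the actual construction but contradicts your own, in which $X_i$ was declared to be the branched cover.

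The remainder of your outline is sound in spirit and close to the cited argument, with two smaller corrections needed. First, $w_2$-type I is by definition a condition on the universal cover, $w_2(\wt{X_i})\neq 0$, so the class of odd self-intersection must be exhibited on $\wt{X_i}$ (e.g.\ via the ramification divisor or pullbacks of ruling classes on the cover), not merely on $X_i$; exhibiting an odd class on $X_i$ only shows $w_2(X_i)\neq 0$, which does not decide between types I and III. Second, for the non-decomposability of $\wt{X_i}$ you should apply Witten's nonvanishing theorem directly to $\wt{X_i}$, which is itself a minimal surface of general type with $b^+>1$ (being a finite \'etale cover of one); Seiberg--Witten invariants do not simply ``pull back'' under covers, though your parenthetical essentially concedes this. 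Your assessment of the stabilization step is accurate: the dissolving statement $\wt{X_i}\#\bcp^2\cong n\bcp^2\#m\cpb$ is a Mandelbaum--Moishezon-type result that is \emph{proved} in \cite{sio} for these specific iterated covers rather than quoted off the shelf, and Freedman's theorem alone only yields the homeomorphism.
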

 We call the manifolds satisfying the above diffeomorphism conditions {\em almost completely decomposable}.
We use $\chi_h$ to denote the Todd genus or the holomorphic Euler characteristic. This is a topological invariant and it can be computed in terms of the other topological invariants as $\chi_h=\frac14(\chi+\tau).$ As the topological invariants of the manifolds constructed increase rapidly, we assume that $X_i$ are ordered such that $\chi_h(X_i)$ is an increasing sequence.

The ampleness of the canonical line bundle is necessary for the first Chern class to be definite $c_1(X_i)<0.$ In this situation, a result of Cao \cite{c, c-c} proves the existence of non-singular solutions to the normalized Ricci flow. We  recall the following version of Cao's result which appears in \cite{c-c}. 
\begin{thm}[\cite{c, c-c}]\label{cao-K}
Let $M$ be a compact K{\"{a}}hler manifold with definite first Chern class ${c}_{1}(M)$. If ${c}_{1}(M)=0$, then for any initial K{\"{a}}hler metric $g_{0}$, the solution to the normalized Ricci flow exists for all time and converges to a Ricci-flat metric as $t \rightarrow \infty$. If ${c}_{1}(M) < 0$ and the initial metric $g_0$ is chosen to represent the opposite of the first Chern class, then the solution to the normalized Ricci flow exists for all time and converges to an Einstein metric of negative scalar curvature as $t \rightarrow \infty$. If ${c}_{1}(M) > 0$ and the initial metric $g_0$ is chosen to represent the first Chern class, then the solution to the normalized Ricci flow exists for all time. 
\end{thm}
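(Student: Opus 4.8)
The plan is to follow Cao's reduction of the normalized Kähler--Ricci flow to a scalar parabolic complex Monge--Amp\`ere equation, and then to establish long-time existence (all three cases) and, in the non-positive cases, convergence, by combining the maximum principle with a priori estimates of Aubin--Yau--Calabi type. Write $n=\dim_{\CC}M$ and put the flow in the normalized form $\partial_t\omega=-\mathrm{Ric}(\omega)+c\,\omega$, where, after rescaling the initial metric, the Einstein constant $c\in\{-1,0,+1\}$ is taken to have the same sign as $c_1(M)$. The first observation is that this flow preserves the Kähler condition: a standard computation shows that $\partial_t\omega$ stays of type $(1,1)$ and closed. The second is that, once the initial class is chosen as prescribed (a positive multiple of $\pm c_1(M)$ when $c_1\neq0$, arbitrary when $c_1=0$), the induced ODE on $H^2(M;\RR)$ has $[\omega_0]$ as a fixed point, so that $[\omega(t)]\equiv[\omega_0]$ as long as the flow exists.

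Because the cohomology class is frozen, I would write $\omega(t)=\omega_0+i\ddbar\varphi(t)$ for a time-dependent potential and reduce the tensorial flow to the scalar equation
\[
\partial_t\varphi \;=\; \log\frac{(\omega_0+i\ddbar\varphi)^n}{\omega_0^{\,n}} \;+\; c\,\varphi \;+\; F,
\]
where $F$ is a fixed smooth function built from the Ricci potential of $\omega_0$. The decisive structural feature is the sign of the coefficient $c$ of the zeroth-order term: it is $-1$ (a damping, ``good'' sign) when $c_1<0$, it is $0$ when $c_1=0$, and it is $+1$ (an ``anti-damping'' sign) when $c_1>0$. Short-time existence is then immediate, since the linearization of the right-hand side at any Kähler potential is $\partial_t-\Delta_{\omega_\varphi}-c$, a uniformly parabolic operator as long as $\omega_\varphi>0$; the standard parabolic theory applies.

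The heart of the argument --- and the step I expect to be the main obstacle --- is the derivation of a priori estimates guaranteeing that $\omega_\varphi$ remains a genuine metric, uniformly equivalent to $\omega_0$ with controlled derivatives, so that the flow cannot break down in finite time. Here I would: (i) bound $\varphi$ and $\dot\varphi$ in $C^0$ by the maximum principle (for $c\le0$ the bound is uniform in $t$; for $c>0$ one only needs finiteness on each compact interval $[0,T]$, which already suffices for long-time existence); (ii) prove the second-order Laplacian estimate controlling $\mathrm{tr}_{\omega_0}\omega_\varphi$ from above, by applying the maximum principle to a quantity such as $\log\mathrm{tr}_{\omega_0}\omega_\varphi-A\varphi$ together with the lower bound on $\omega_\varphi^n/\omega_0^n$ coming from the flow; and (iii) bootstrap to all higher derivatives via the Calabi $C^3$ estimate, or via Evans--Krylov followed by Schauder theory. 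Combining (i)--(iii) with a continuity argument extends the solution to $t\in[0,\infty)$ in every case, which in particular yields the $c_1>0$ statement.

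Finally, for the convergence claims when $c_1\le0$ I would differentiate the scalar equation in $t$ and set $\psi=\dot\varphi$, obtaining $\partial_t\psi=\Delta_{\omega_\varphi}\psi+c\,\psi$. When $c=-1$ the maximum principle gives $\sup_M|\psi(t)|\le e^{-t}\sup_M|\psi(0)|$, so $\dot\varphi\to0$ exponentially, $\varphi(t)$ converges in $C^\infty$, and the limit potential solves the elliptic complex Monge--Amp\`ere equation whose metric $\omega_\infty$ satisfies $\mathrm{Ric}(\omega_\infty)=-\omega_\infty$, a Kähler--Einstein metric of negative scalar curvature. When $c=0$ the same computation gives the pure heat equation $\partial_t\psi=\Delta_{\omega_\varphi}\psi$, whose spatial oscillation $\mathrm{osc}_M\psi$ is non-increasing; one shows $\mathrm{osc}_M\psi\to0$ and, after normalizing the additive constant, that $\varphi$ converges to a solution of the homogeneous complex Monge--Amp\`ere equation, giving a Ricci-flat limit --- precisely the Calabi--Yau content. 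In the remaining case $c=+1$ the coefficient has the wrong sign, the estimate degrades to $\sup_M|\psi(t)|\le e^{t}\sup_M|\psi(0)|$, and convergence genuinely may fail; this is why only long-time existence is asserted when $c_1>0$.
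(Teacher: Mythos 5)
The paper offers no proof of this statement: it is quoted from Cao \cite{c} (see also \cite{c-c}), so there is no internal argument to compare against, and your outline is precisely Cao's original route --- freeze the K\"ahler class, reduce to the parabolic complex Monge--Amp\`ere equation for the potential, get long-time existence in all three cases from maximum-principle bounds plus the second-order estimate and a Calabi-type (or Evans--Krylov) third-order estimate, then prove convergence for $c\le 0$ from the evolution of $\psi=\dot\varphi$. The $c_1<0$ and $c_1>0$ parts are correct as written: the damping term gives $\sup_M|\psi|\le e^{-t}\sup_M|\psi(0)|$ and hence $C^\infty$ convergence to the negative K\"ahler--Einstein metric, while for $c=+1$ the estimates, though exponentially growing, are finite on each $[0,T]$, which is all that long-time existence requires.

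Two points in the $c_1=0$ case are asserted more strongly than your stated tools deliver, and they are exactly where Cao had to work. First, the maximum principle does \emph{not} give a uniform $C^0$ bound on $\varphi$ when $c=0$: it bounds $\sup_M|\dot\varphi|$ uniformly (pure heat equation), which only yields linear-in-$t$ growth of $\varphi$. The uniform quantity is $\mathrm{osc}_M\varphi$, and it comes from Yau's Moser-iteration $C^0$ estimate applied at each fixed time, using that the uniform bound on $\dot\varphi$ controls the volume ratio $\omega_\varphi^n/\omega_0^n$ from above and below; only then do the $C^2$ and third-order estimates become uniform in $t$. Second, ``one shows $\mathrm{osc}_M\psi\to0$'' is the crux, not a remark: for a heat equation with time-dependent metric, non-increasing oscillation does not by itself force decay. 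Cao's key new ingredient here is a Harnack estimate of Li--Yau type for the heat equation along the flow, which applies because your steps (i)--(iii) make the metrics $g(t)$ uniformly equivalent with uniformly bounded geometry; this gives exponential decay of $\mathrm{osc}_M\dot\varphi$, and then the suitably normalized potentials converge to a solution of the homogeneous complex Monge--Amp\`ere equation, i.e.\ a Ricci-flat metric. With these two repairs your sketch is a complete and faithful account of the cited proof.
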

Notice that, in case where ${c}_{1}(M) = 0$ or ${c}_{1}(M) < 0$, the solution is actually non-singular in the sense of Definition \ref{non-sin}. See also \cite{c-c}. \par

%%%%%%%%%%%%%%%%%%%%%%%%%%%%%%%%%%%%%%%%%%%%%%%%%%%%%%%%%%%%%%%%%%%%%%%
\section{Proofs of Theorems \ref{main-A}  and \ref{main-B}}\label{sec-4}
%%%%%%%%%%%%%%%%%%%%%%%%%%%%%%%%%%%%%%%%%%%%%%%%%%%%%%%%%%%%%%%%%%%%%%%

To be able to give the proofs of the main theorems we need one more ingredient, which is the abundance of symplectic $4-$manifolds with nice given properties. A suitable construction due to Braungardt and Kotschick  \cite{b-k} exhibits such manifolds:

\begin{prop}[\cite{b-k}]\label{geogr-sympl}
For every $\epsilon >0,$ there is a constant $c(\epsilon)>0$ such
 that every integer lattice point $(x,y)$ in the first quadrant satisfying
 $$ y\leq (9-\epsilon)x- c(\epsilon)$$
 is realized by the Chern invariants $(\chi_h, c_1^2)$ of
 infinitely many, pairwise non-diffeomorphic, simply connected,
 minimal, symplectic manifolds, all of which are almost completely decomposable.
 \end{prop}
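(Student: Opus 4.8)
The plan is to realize the prescribed region of the $(\chi_h,c_1^2)$-plane by assembling explicit building blocks with the symplectic normal connected sum (fiber sum), and then to promote each realized lattice point to an infinite family of distinct smooth structures by Fintushel-Stern knot surgery. Concretely, I would first reduce the statement to two tasks: (i) for every admissible lattice point $(x,y)$, construct \emph{one} simply connected, minimal, symplectic, almost completely decomposable $4$-manifold $M_{x,y}$ with $(\chi_h,c_1^2)=(x,y)$ that moreover contains a homologically essential symplectic torus $T$ of self-intersection $0$ with simply connected complement; and (ii) show that knot surgery on $T$ preserves all of these properties while producing infinitely many pairwise non-diffeomorphic manifolds. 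Task (ii) supplies the ``infinitely many, pairwise non-diffeomorphic'' conclusion, so the geometric heart is task (i).

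For the geography in task (i), I would use the additivity of the Chern numbers under the symplectic fiber sum: summing along a surface of genus $g$ sends $\chi_h\mapsto \chi_h^{(1)}+\chi_h^{(2)}+(g-1)$ and $c_1^2\mapsto c_1^{2,(1)}+c_1^{2,(2)}+8(g-1)$. Along tori ($g=1$) these become exact additions, so iterated torus sums add invariants without loss and realize slopes that are weighted averages of the summands' slopes. I would therefore fix a small stock of blocks: low-slope pieces such as the elliptic surfaces $E(n)$ (slope $0$) and product or Lefschetz pieces of slope $8$, together with \emph{high-slope} pieces of slope at least $9-\epsilon$. Iterated torus sums of these sweep out the cone of slopes in $[0,9-\epsilon]$, and by choosing the block invariants so that their differences generate the integer lattice $\ZZ^2$, every integer lattice point strictly below the line $y=(9-\epsilon)x$ is hit, except for a boundary strip of bounded width that is absorbed into the constant $c(\epsilon)$. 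Simple connectivity of each sum follows from a Seifert-van Kampen argument, arranging the blocks to have simply connected complements of the gluing surfaces so that the meridians normally generate; minimality of the sums follows from Usher's theorem on minimality of symplectic fiber sums.

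For task (ii) I would apply Fintushel-Stern knot surgery along the square-zero symplectic torus $T$. This operation preserves $e$ and $\tau$, hence $(\chi_h,c_1^2)$; it preserves simple connectivity, since the complement of $T$ is simply connected by construction; it stays symplectic when performed with fibered knots; and it preserves minimality. Using knots with distinct Alexander polynomials changes the Seiberg-Witten invariant set, producing infinitely many pairwise non-diffeomorphic manifolds, all homeomorphic to $M_{x,y}$ by Freedman's theorem \cite{free} since they are simply connected with the same intersection form.

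The main obstacle is the \emph{almost complete decomposability}, and its tension with the slope being close to $9$. One must show that every manifold produced, including all the knot-surgered ones, becomes a connected sum of $\bcp^2$'s and $\cpb$'s after a single connected sum with $\bcp^2$. This splits into three claims: that the building blocks are almost completely decomposable; that the torus or surface fiber sum preserves the property (via Mandelbaum-Moishezon-Gompf dissolving techniques, showing the sum dissolves after one blow-up once each summand does); and, crucially, that knot surgery preserves it (the technical lemma that knot-surgered manifolds become standard after one stabilization). The delicate point is that high Chern slope forces these manifolds to have nontrivial Seiberg-Witten invariants, hence to be far from any connected sum, while almost complete decomposability demands that they be nearly standard after one blow-up; the classical high-slope surfaces (ball quotients) are ruled out since they are not simply connected. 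Producing simply connected, minimal, symplectic building blocks of slope approaching $9$ that nonetheless dissolve after a single $\bcp^2$, and verifying that both the fiber sum and the knot surgery respect this property, is where the real work lies.
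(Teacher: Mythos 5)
The paper never proves Proposition \ref{geogr-sympl}: it is imported verbatim from Braungardt--Kotschick \cite{b-k}, so your attempt can only be measured against their argument. In outline you reconstruct it faithfully: Gompf fiber sums \cite{g} with the correct arithmetic ($\chi_h$ gains $g-1$ and $c_1^2$ gains $8(g-1)$ per sum along a genus-$g$ surface, matching the formulas quoted in Section 4 of this paper), Seifert--van Kampen for simple connectivity, Seiberg--Witten data to separate infinitely many smooth structures, Freedman \cite{free} for the homeomorphism type, and Mandelbaum--Moishezon--Gompf dissolving techniques \cite{man, g} for almost complete decomposability. (Using Usher's minimality theorem is anachronistic relative to \cite{b-k} but mathematically harmless.)

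The proposal nonetheless has a genuine gap, and it sits exactly where you admit ``the real work lies'': the existence of simply connected, minimal, symplectic, almost completely decomposable building blocks of slope close to $9$, containing square-zero symplectic tori or genus-$2$ surfaces with simply connected complement, is postulated rather than constructed. This cannot be outsourced: fiber sums only average slopes, and the classical low-slope pieces you name ($E(n)$ of slope $0$, genus-$2$ sums contributing increments of $(1,8)$) generate nothing beyond slope $8$, while blow-ups are forbidden by minimality; the entire content of the proposition is the strip between slope $8$ and $9-\epsilon$. Producing such blocks by explicit algebro-geometric constructions (symplectic sums of simple pieces along tori and genus-$2$ curves, e.g.\ their manifolds such as $S_{1,1}$ reused in Section 5 of this paper) and proving that \emph{those specific} manifolds dissolve after a single $\bcp^2$ is precisely the contribution of \cite{b-k}. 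A second, related gap is your appeal to a ``technical lemma'' that Fintushel--Stern knot surgery preserves almost complete decomposability: no off-the-shelf result gives this in the form needed (one single $\bcp^2$; the known one-stabilization theorems of Auckly type concern $S^2\times S^2$-stabilization or restricted families). Braungardt--Kotschick avoid this entirely by generating the infinite families inside the fiber-sum construction itself --- e.g.\ varying elliptic pieces by logarithmic transformations, for which almost complete decomposability is classical \cite{man} and persists under Gompf's dissolving of fiber sums --- and by distinguishing the results through their Seiberg--Witten basic classes \cite{k-m-t}, which is exactly why the present paper can assert that the smooth structures in Proposition \ref{geogr-sympl} are so distinguished. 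Without an actual construction of the high-slope blocks and a correct replacement for the knot-surgery step, your argument establishes the proposition only for slopes up to $8$.
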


The smooth structures constructed in the above proposition are distinguished by the Seiberg-Witten basic classes.
We are now ready to give the proofs of the main theorems:
\begin{proof}[Proof of Theorem \ref{main-A}]
Let $d\geq2$ be an arbitrary integer.
Proposition \ref{sio-thm} provides us with an infinite family of manifolds $\{X_i\}$ with fundamental group $\pi_1(X_i)=\ZZ_d, $ all of which admit non-singular solutions to the normalized Ricci flow by Theorem \ref{cao-K}. Moreover, as they are complex surfaces of general type, their Yamabe invariant $\yy (X_i)<0,$ by a result of LeBrun \cite{L-1}. \par
By Proposition \ref{geogr-sympl} there exists an integer $n_0>0$ such that any integer lattice point $(x,y)$ in the first quadrant, satisfying $x>n_0, y\leq8.5 x$ is represented by infinitely many  minimal symplectic manifolds with the required properties. As the numerical invariants of the sequence $\{X_i\}$ are increasing, we can assume that all these manifolds satisfy $\chi_h(X_i)>n_0.$
Hence, for any $x_i=\chi_h(X_i)$, Proposition \ref{geogr-sympl} tells us that there exist infinitely many homeomorphic, non-diffeomorphic minimal symplectic manifolds $Z'_{i,j}$ such that $\chi_h(Z'_{i,j})=\chi_h(X_i)$ and $9x_i>c_1^2(Z'_{i,j}) >8 x_i.$
Let 
$$
Z_{i,j}=Z'_{i,j}\#Y_d\#k_i\cpb, ~\text{where} ~k_i=c_1^2(Z'_{i,j})-c_1^2(X_i),
$$
and where $Y_d$ is a rational homology sphere with $\pi_1(Y_d)=\ZZ_d.$ Hence the topological invariants of $Z_{i,j}$ are $\pi_1(Z_{i,j})=\pi_1(Y_d)=\ZZ_d=\pi_1(X_i), c_1^2(Z_{i,j})=c_1^2(X_i), \chi_h(Z_{i,j})=\chi_h(X_i).$ As $Z_{i,j}$ is of $w_2-$type $I,$ Theorem \ref{ha-kr} tells us that $X_i$ and $Z_{i,j}$ are all homeomorphic. 
The manifolds $Z_{i,j}$ are non-diffeomorphic as their Seiberg-Witten basic classes \cite{k-m-t} remain distinct after taking the connected sum with $\cpb$ and $Y_d$ . None of them is diffeomorphic to $X_i$ as none of them has no non-singular solutions to the normalized Ricci flow, as we see next.

The number of $\cpb$'s, $k_i,$ can be estimated to be
$$
k_i=c_1^2(Z'_{i,j})-c_1^2(X_i)>8x_i-5x_i=3x_i>3\cdot\frac19c_1^2(Z'_{i,j})=\frac13(2\chi+3\tau)(Z'_{i,j})
$$
Hence, by Corollary \ref{cor}, there are no non-singular solutions to the normalized Ricci flow on any of the $Z_{i,j}.$ The manifolds $Z_{i,j}$ are not symplectic manifolds, but nevertheless they have non-trivial Seiberg-Witten invariant \cite{k-m-t}, so they must have $\yy(Z_{i,j})<0$ (cf. \cite{L-1, ism}). \par
The universal cover of $Z_{i,j}$ is diffeomorphic to the connected sum 
$$dZ'_{i,j}  \#  d k_i \cpb  \#(d-1)(S^2\times S^2)= dZ'_{i,j}\#(d-1)\bcp^2 \#(d-1+d k_i)\cpb.$$ But $Z'_{i,j}$ has the property that after taking the connected sum with one $\bcp^2,$ it decomposes as connected sums of $\bcp ^2$'s and $\cpb $'s. Hence the universal cover of $Z_{i,j}$ is diffeomorphic to $n\bcp^2\#m\cpb$ for appropriate $n$ and $m.$
The universal cover of $X_i$ is a complex manifold of general type, and it is not going to be diffeomorphic to connected sums of $\bcp ^2$'s and $\cpb $'s. But after taking the connected sum with one $\bcp^2$  it is going to decompose, by the construction in Proposition \ref{sio-thm}.
\end{proof}

The ideas used in the above proof can be employed to give us results on equivariant solutions of the normalized Ricci flow on manifolds of the form $a\bcp^2\#b\cpb$ with the canonical smooth structure.
They complement the results of the first author \cite{ism} on exotic smooth structures.

\begin{proof}[Proof of Theorem \ref{main-B}]
Let $\delta >0$ be a small positive number and let $\epsilon =\frac32\delta$.
Proposition \ref{geogr-sympl} tells us that for  this $\epsilon$ there exists an $c(\epsilon)$ such that to any integer lattice point $(x,y)$ satisfying:
 \begin{equation}\label{region}
 0< y\leq (9-\epsilon)x- c(\epsilon)
 \end{equation}
we can associate infinitely many homeomorphic, non-diffeomorphic, simply-connected, almost completely decomposable, minimal symplectic manifolds which have topological invariants $(\chi_h,c_1^2)=(x,y).$  

Let $C(\delta)=\frac{2d}{3}(c(\epsilon)+1).$

Let $n,m$ be positive integer numbers such that $d/n,d/m$ and 
$$n ~~< ~~ (6-\delta)m-C(\delta)$$
Or equivalently:
\begin{align}
\frac{n}{d} &~<~ (6-\delta)\frac{m}{d}- \frac{C(\delta)}{d}\notag\\
\frac{3n}{2d} & ~< ~ (9-\frac32\delta)\frac{m}{d}- \frac{3C(\delta)}{2d} \notag\\
\frac{3n}{2d} & ~< ~(9-\epsilon)\frac{m}{d}-c(\epsilon)-1 \notag\\
\frac{3n}{2d} +1 & ~< ~(9-\epsilon)\frac{m}{d}-c(\epsilon) \notag
\end{align}

Then $[\frac{3n}{2d}] +1$ and $\frac{m}{d}$ satisfy the condition  (\ref{region}), hence there are infinitely many symplectic manifolds $M_i$ such that $c_1^2(M_i)=[\frac{3n}{2d}] +1$ and $\frac{\chi+\tau}{4}(M_i)=\frac{m}{d}$. Moreover the manifolds $M_i$ are homeomorphic, non-diffeomorphic, almost completely decomposable manifolds. The differential structures are distinguished by the Seiberg-Witten basic classes.

Let:
$$X_i=M_i\#Y_d\#k\cpb,~\text{with}~ k=[\frac{3n}{2d}] +1-\frac nd=[\frac n{2d}]+1>\frac13 c_1^2(M_i)$$
Then $\frac{\chi+\tau}{4}(X_i)=\frac md$ and $(2\chi+3\tau)(X_i)=\frac{n}{d}.$ 
The manifolds $X_i$ remain homeomorphic to each other, and  using the formula for the Seiberg-Witten basic classes of the connected sum \cite{k-m-t} we can immediately see that any two manifolds are not diffeomorphic. Their universal cover $\wt{X_i}$ is diffeomorphic to $dM_i\#dk\cpb\#(d-1)(S^2\times S^2)$ and as $M_i$ are almost completely decomposable, this implies that all $\wt{X_i}$ are diffeomorphic to $X=(2m-1)\bcp^2\#(10m-n-1)\cpb$, i.e.  $\frac{\chi+\tau}{4}(X)=m,(2\chi+3\tau)(X)=n.$

We can use Corollary \ref{cor} to conclude that the manifolds $X_i$ don't admit  non-singular solutions of the normalized Ricci flow. This immediately implies the non-existence of the $\ZZ_d$ invariant solutions of the normalized Ricci flow for any of the infinitely many $\ZZ_d$ group actions, corresponding to the fundamental group action on the universal covers $\wt{X_i}=X.$
\end{proof}

For simplicity, we state the theorem for finite cyclic groups, but the results are also true for any finite groups acting freely on the $3-$dimensional sphere, or for direct sums of the above groups. Of course the divisibility condition needs to be changed according to the order of the given group.

Unfortunately, requiring that a metric $g$ is $\ZZ_d-$invariant is a strong condition.
\begin{prop}
Let $G$ be one of the above $\ZZ_d$ actions on $X_{n,m}=(2m-1)\bcp^2\#(10m-n-1)\cpb$ and let $g$ be a $G-$invariant metric. Then $Y_{[g]}<0.$
\end{prop}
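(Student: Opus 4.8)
The plan is to descend $g$ to the quotient and then compare Yamabe constants across the covering. Since $G=\ZZ_d$ acts freely on $X_{n,m}$ as the deck group of the universal covering $\pi\colon X_{n,m}\to X_i$, a $G$-invariant metric $g$ descends to a genuine smooth metric $\bar g$ on the quotient $X_i:=X_{n,m}/G$, with $g=\pi^*\bar g$. Here $X_i$ is precisely the corresponding manifold $M_i\#Y_d\#k\cpb$ constructed in the proof of Theorem \ref{main-B}; it carries a non-trivial Seiberg--Witten invariant, so $\yy(X_i)<0$ by the estimate (\ref{yama-1}) (cf.\ \cite{L-1,ism}), exactly as for the manifolds $Z_{i,j}$ in the proof of Theorem \ref{main-A}. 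The goal is then to bound $Y_{[g]}$ from above by a positive multiple of $\yy(X_i)$.

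The decisive step is the behaviour of the Yamabe functional under the finite covering $\pi$. Working in dimension $n=4$ and setting $a=\tfrac{4(n-1)}{n-2}$, I would take an arbitrary positive function $v$ on $X_i$, pull it back to $u:=v\circ\pi$ on $X_{n,m}$, and use $s_g=s_{\bar g}\circ\pi$ together with $\deg\pi=d$ to obtain
\begin{align*}
\int_{X_{n,m}}\!\big(a|\nabla u|_g^2+s_g u^2\big)\,d\mu_g
&= d\int_{X_i}\!\big(a|\nabla v|_{\bar g}^2+s_{\bar g}\,v^2\big)\,d\mu_{\bar g},\\
\int_{X_{n,m}}\! u^{2n/(n-2)}\,d\mu_g
&= d\int_{X_i}\! v^{2n/(n-2)}\,d\mu_{\bar g}.
\end{align*}
Recognizing the two sides as the ingredients of the normalized Yamabe functional $Q(\cdot)$ appearing in the definition of $Y_{[\cdot]}$, and dividing (with exponent $(n-2)/n=\tfrac12$ on the volume), one finds $Q_{X_{n,m}}(v\circ\pi)=d^{\,2/n}\,Q_{X_i}(v)$. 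Since $Y_{[g]}$ is the infimum of $Q_{X_{n,m}}$ over \emph{all} conformal factors on $X_{n,m}$, in particular over the smaller family $\{v\circ\pi\}$ of $G$-invariant ones, this yields the upper bound
\begin{equation*}
Y_{[g]}\ \le\ \inf_{v}Q_{X_{n,m}}(v\circ\pi)\ =\ d^{\,2/n}\,\inf_{v}Q_{X_i}(v)\ =\ d^{\,2/n}\,Y_{[\bar g]}.
\end{equation*}

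Finally, $Y_{[\bar g]}$ is one particular Yamabe constant on $X_i$, so by the very definition of the Yamabe invariant $Y_{[\bar g]}\le\yy(X_i)<0$; as $d^{\,2/n}>0$, this forces $Y_{[g]}<0$, as claimed. I expect the only point requiring genuine care to be the covering comparison: one must pass through the \emph{$G$-invariant} conformal factors (equivalently, pullbacks from $X_i$, which makes sense precisely because $G$ acts freely) to get the clean identity $Q_{X_{n,m}}(v\circ\pi)=d^{\,2/n}Q_{X_i}(v)$, and then observe that enlarging to the full conformal class on $X_{n,m}$ can only decrease the infimum, producing an upper bound of the correct sign. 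No delicate PDE analysis is needed beyond the conformal transformation law for scalar curvature already implicit in the functional formulation of $Y_{[\cdot]}$.
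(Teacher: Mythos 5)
Your proof is correct, and it shares the skeleton of the paper's argument---descend the $G$-invariant metric $g$ to a metric $\hat g$ on the quotient $\hat X=X_{n,m}/\ZZ_d$, identify $\hat X$ with one of the manifolds $X_i$ from the proof of Theorem \ref{main-B}, and use the non-trivial Seiberg--Witten invariant together with LeBrun's result to get ${\mathcal Y}(\hat X)<0$, hence $Y_{[\hat g]}<0$---but it diverges at the transfer step, and there the two routes are genuinely different. The paper invokes the affirmative solution of the Yamabe problem (Trudinger--Aubin--Schoen) to produce $g'\in[\hat g]$ with constant \emph{negative} scalar curvature, pulls it back, and notes that $\pi^*g'\in[g]$ has negative constant scalar curvature, which immediately gives $Y_{[g]}=Y_{[\pi^*g']}<0$ (evaluating the Yamabe quotient at $\pi^*g'$ itself already produces a negative value). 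You instead remain entirely at the variational level: pulling back test conformal factors through the $d$-fold local isometry $\pi$ yields the identity $Q_{X_{n,m}}(v\circ\pi)=d^{2/n}\,Q_{\hat X}(v)$, and since the pullbacks form a subfamily of all positive test functions upstairs, the infimum can only drop, giving $Y_{[g]}\le d^{2/n}\,Y_{[\hat g]}<0$. Your computation checks out in every detail: both the numerator and the volume integral scale by $d$, the exponent $(n-2)/n=\tfrac12$ produces the factor $d^{1-(n-2)/n}=d^{2/n}$, the infimum inequality goes the right way, and multiplying the negative number $Y_{[\hat g]}$ by the positive constant $d^{2/n}$ preserves the sign. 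What your route buys is that it avoids the deep existence theorem altogether---only the conformal transformation law behind the functional formulation of $Y_{[\cdot]}$ is needed---and it produces a quantitative estimate $Y_{[g]}\le d^{1/2}\,Y_{[\hat g]}$ in dimension four rather than a bare sign; what the paper's route buys is brevity, since the solution of the Yamabe problem is already quoted in the introduction and the constant-scalar-curvature representative makes the conclusion a one-line observation.
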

\begin{proof}
Let  $G$ and $g$ as in the statement of the proposition. Let 
$$\pi:X_{n,m}\to\hat{X}:=X_{n,m}/G$$
 be the natural projection, where $\hat{X}$ is the smooth $4-$manifold obtained by taking the quotient of $X_{n,m}$ under the free action of $G.$
As $g$ is $G-$invariant, we have an induced metric $\hat g=\pi_*g$ on $\hat{X}.$ But $\hat X$ is one of the manifolds $X_i$ constructed in the previous proof, hence it has non-trivial Seiberg-Witten invariant. This implies \cite{L-1} that ${\mathcal Y}(\hat X)<0$ and of course $Y_{[\hat g]}<0.$ The solution of the Yamabe problem tells us that there exists a metric $g'\in[\hat g]$ with negative constant scalar curvature $s_{g'}\equiv ct<0.$ 

Then $\pi^*(g')$ is a metric of negative constant scalar curvature on $X_{n,m}$ in the conformal class of $g.$ So we can conclude that $Y_{[\pi^*g']}=Y_{[g]}<0.$
\end{proof}

We would like to remark that the constant $c(\epsilon)$ is a large positive number and increases rapidly. As far as the authors know, one of the examples with the smallest topology for which the methods in the above theorem can be used is the following:
\begin{cor}
On $15\bcp^2 \# 77 \overline{\bcp^2}$, there exists an
 involution $\sigma$, acting freely, such that
 $15 \bcp^2 \# 77 \overline{\bcp^2}$ does not admit a $\sigma-$equivariant non-singular solution
of the normalized Ricci flow.
\end{cor}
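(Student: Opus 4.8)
The plan is to exhibit $15\bcp^2\#77\cpb$ as the universal cover of a manifold to which Corollary \ref{cor} applies; concretely this is the case $d=2$, $(n,m)=(2,8)$ of Theorem \ref{main-B}. First I would pass from the target manifold to the parameters of Theorem \ref{main-B}: solving $2m-1=15$ and $10m-n-1=77$ yields $m=8$ and $n=2$, and for an involution we take $d=2$, so that the divisibility conditions $d\mid n$ and $d\mid m$ both hold and $15\bcp^2\#77\cpb=(2m-1)\bcp^2\#(10m-n-1)\cpb$ is precisely the manifold $X$ of Theorem \ref{main-B}. The involution $\sigma$ will be one of the free $\ZZ_2$-actions produced there, namely the deck transformation of a double cover.

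Second, I would make the construction explicit at these small values. Following the proof of Theorem \ref{main-B}, Proposition \ref{geogr-sympl} is invoked at the lattice point
\[
(\chi_h,c_1^2)=\Big(\tfrac{m}{d},\big[\tfrac{3n}{2d}\big]+1\Big)=(4,2),
\]
which should produce a simply connected, minimal, almost completely decomposable symplectic manifold $M$ with $b^+(M)=2\chi_h-1=7\geq 2$ and hence nontrivial Seiberg-Witten invariant. I would then set
\[
Q:=M\#Y_2\#k\,\cpb,\qquad k=\big[\tfrac{n}{2d}\big]+1=1,
\]
where $Y_2$ is Ue's spin rational homology $4$-sphere \cite{ue} with $\pi_1(Y_2)=\ZZ_2$ and universal cover $S^2\times S^2$. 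A direct computation gives $\pi_1(Q)=\ZZ_2$, $\tfrac{\chi+\tau}{4}(Q)=\chi_h(M)=4$ and $(2\chi+3\tau)(Q)=c_1^2(M)-k=1$. The double cover $\wt Q\cong 2M\#2\,\cpb\#(S^2\times S^2)$ becomes standard once $M$ is dissolved: using $(S^2\times S^2)\#\cpb\cong\bcp^2\#2\,\cpb$ to seed one $\bcp^2$ and then the almost complete decomposability of $M$ twice, one checks $\wt Q\cong 15\bcp^2\#77\cpb$. The involution $\sigma$ is the nontrivial deck transformation of $\wt Q\to Q$, which acts freely.

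Third, I would rule out equivariant solutions. Since $M$ is symplectic with $b^+\geq 2$, one has $2\chi(M)+3\tau(M)=c_1^2(M)=2>0$ and $k=1\geq\tfrac13\cdot 2=\tfrac13\big(2\chi(M)+3\tau(M)\big)$, so Corollary \ref{cor}(2) shows $Q$ carries no quasi-non-singular, hence no non-singular, solution of the normalized Ricci flow. A $\sigma$-equivariant non-singular solution on $15\bcp^2\#77\cpb$ is a family of $\sigma$-invariant metrics; under the Riemannian covering $\wt Q\to Q$ it descends to a solution on $Q$ whose normalization constant is unchanged (numerator and denominator of the average scalar curvature both scale by $2$) and whose curvature is pointwise identified with that upstairs, hence again non-singular, contradicting the previous sentence. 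Therefore no $\sigma$-equivariant non-singular solution exists.

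The step I expect to be the genuine obstacle is the realizability of the building block $M$, i.e. that Proposition \ref{geogr-sympl} actually covers the borderline point $(\chi_h,c_1^2)=(4,2)$. Equivalently, one must find a small $\delta>0$ with $0<n<(6-\delta)m-C(\delta)$ for $n=2$, $m=8$, that is $C(\delta)=\tfrac{2d}{3}(c(\epsilon)+1)<46-8\delta$ with $\epsilon=\tfrac32\delta$; this forces $c(\epsilon)$ to stay below roughly $33.5$. Because the Braungardt-Kotschick constant $c(\epsilon)$ is large and grows rapidly as $\epsilon\to 0$, this inequality is only barely satisfiable, which is exactly why $15\bcp^2\#77\cpb$ is, to the authors' knowledge, the smallest topology reachable by this method; the required model sits on the Noether line $c_1^2=2\chi_h-6$ and is the tightest demand the geography construction can meet here.
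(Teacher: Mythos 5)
Your reduction to Theorem \ref{main-B} at $(d,n,m)=(2,2,8)$ gets the arithmetic right, and everything downstream of the existence of the building block is correct and essentially identical to the paper's own proof: given a simply connected, minimal, symplectic, almost completely decomposable $M$ with $(\chi_h,c_1^2)=(4,2)$ (hence $\chi=46$, $\tau=-30$, $b^+=7$, $b^-=37$), the quotient candidate $Q=M\#Y_2\#\cpb$ satisfies $(2\chi+3\tau)(M)=2>0$ and $k=1\geq\frac13\cdot 2$, so Corollary \ref{cor}(2) rules out (quasi-)non-singular solutions on $Q$; the double cover dissolves as $2M\#2\cpb\#(S^2\times S^2)\cong 2M\#\bcp^2\#3\cpb\cong 15\bcp^2\#77\cpb$; and your descent argument for $\sigma$-invariant solutions, including the check that the normalizing term and curvature bounds are unchanged under the free Riemannian covering, is sound and is exactly what the paper leaves implicit.

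The gap is precisely where you flag it, and as written it is fatal: Proposition \ref{geogr-sympl} cannot be invoked at $(\chi_h,c_1^2)=(4,2)$. That lattice point would require $2\leq(9-\epsilon)\cdot 4-c(\epsilon)$, i.e.\ $c(\epsilon)\leq 34-4\epsilon$ (equivalently, in your formulation, $c(\epsilon)\lesssim 33.5$), and nothing in \cite{b-k} provides any such upper bound on $c(\epsilon)$; on the contrary, the paper remarks immediately before this corollary that $c(\epsilon)$ ``is a large positive number and increases rapidly,'' which is exactly why the corollary is \emph{not} a special case of Theorem \ref{main-B}: the pair $(n,m)=(2,8)$ lies outside the region $n<(6-\delta)m-C(\delta)$ for the constants actually available, and $(4,2)$ sits on the Noether line, far below anything the geography construction reaches. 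Your phrase ``only barely satisfiable'' is therefore wishful --- you have no control on $c(\epsilon)$ at all. The paper closes this gap by replacing the geography input with an explicit block: $N$, the double cover of $\bcp^2$ branched along a smooth curve $D$ with $\OO(D)=\OO_{\bcp^2}(8)$. By \cite{sio} this is a simply connected surface of general type (K\"ahler, hence symplectic, with $b^+=7\geq 2$) with $c_1^2(N)=2$, $c_2(N)=46$, $\tau(N)=-30$, and, crucially, it is almost completely decomposable. Substituting this explicit $N$ for your hypothetical $M$ makes your argument go through verbatim; without such a concrete realization of a minimal, almost completely decomposable symplectic manifold at $(\chi_h,c_1^2)=(4,2)$, the proof does not close.
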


\begin{proof}
Let $N$ be the double cover of $\bcp^2$ branched
 along a smooth divisor $D$, such that $\OO(D)=\OO_{\bcp^2}(8).$ Then \cite{sio}, $N$ is 
 a simply connected,  almost completely decomposable, surface of general type and its numerical invariants 
 are:
 $$c_2(N)=46,~c_1^2(N)=2,~(~\tau(N)=-30,~b^+=7~ \text{and}~ b^-=37).$$
By Corollary \ref{cor}, the manifold $M=N \# \cpb \# Y_2$
  does not admit non-singular solutions to the normalized Ricci flow. 

Let $\wt{M}$ be the universal cover of $M$. 
%Mandelbaum \cite{man} proves that cyclic branched covers of $\bcp^2$ are almost completely decomposable. 
Then we have the
 following diffeomorphisms:
$$\wt{M}\cong 2N \# 2\cpb \# (S^2\times S^2)\cong 2N\# \bcp^2
\# 3\cpb  \cong 15\bcp ^2\# 77\cpb.$$

As $M$ does not admit any non-singular solutions to the normalized Ricci flow, this implies that the same is true for $\sigma-$equivariant solutions on $\wt{M}$.
\end{proof}

%%%%%%%%%%%%%%%%%%%%%%%%%%%%%%%%%%%%%%%%%%%%%%%%%%%%%%%%%%%%%%%%%%%%%%%
\section{Related results}\label{related}
%%%%%%%%%%%%%%%%%%%%%%%%%%%%%%%%%%%%%%%%%%%%%%%%%%%%%%%%%%%%%%%%%%%%%%%

As there are no homeomorphism criteria for an arbitrary fundamental group, we can not generalized Theorem \ref{main-A} to other fundamental groups. But we can prove non-existence theorems for a large class of fundamental groups.
\begin{main}\label{non-spin+arb}
Let $G$ be any finitely presented group. For any $\delta>0$ small there exists a constant $c'(\delta)>0$ such that for any integer lattice point $(n,m)$ in the first quadrant and satisfying $n<(6-\delta)m-c'(\delta)$ there exist infinitely many symplectic non-spin manifolds $Z_i, i\in \NN$ which have the following properties: the fundamental group $\pi_1(Z_i)=G, c_1^2(Z_i)=n, \chi_h(Z_i)=m;$  all of $Z_i$ are homeomorphic, but no two are diffeomorphic; all of them have negative Yamabe invariant and satisfy the strict generalized Hitchin-Thorpe Inequality, but none of them admits non-singular solutions to the normalized Ricci flow.
\end{main}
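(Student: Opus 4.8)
The plan is to follow the strategy of Theorems \ref{main-A} and \ref{main-B}, but with two modifications forced by the requirement that the $Z_i$ themselves be symplectic. First, since connect-summing with a rational homology sphere $Y_d$ destroys the symplectic structure and is moreover tailored to cyclic groups, I would instead build the fundamental group $G$ directly into a symplectic model. Second, I would realize the obstruction of Theorem \ref{ricci-ob-3} using only symplectic blow-ups, i.e. connected sums with $\cpb$, an operation that stays within the symplectic category and leaves $\pi_1$ unchanged.

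Concretely, set $\epsilon=\frac{3}{2}\delta$, let $c(\epsilon)$ be the constant of the Braungardt--Kotschick geography, and put $c'(\delta)=\frac{2}{3}\big(c(\epsilon)+1\big)$. The key input I would invoke is a fundamental-group version of Proposition \ref{geogr-sympl}: for the given finitely presented group $G$ and for every lattice point $(x,y)$ in the first quadrant with $y\le (9-\epsilon)x-c(\epsilon)$, there exist infinitely many pairwise non-diffeomorphic, mutually homeomorphic, non-spin symplectic manifolds $W$ with $\pi_1(W)=G$ and $(\chi_h,c_1^2)(W)=(x,y)$. Such families are obtained by combining Gompf's realization of an arbitrary finitely presented group as the fundamental group of a symplectic four-manifold with the geography construction of \cite{b-k}; the infinitely many members sharing a homeomorphism type but carrying distinct Seiberg--Witten basic classes arise from surgery along a nullhomotopic square-zero torus, which preserves both $\pi_1$ and the homeomorphism type while altering the smooth structure. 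Establishing this family is the step I expect to be the main obstacle: for a general $G$ there is no homeomorphism classification (this is precisely why Theorem \ref{main-A} cannot be extended verbatim), so the homeomorphisms among the $Z_i$ must be produced by the construction rather than deduced from numerical invariants.

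Given such a family, for a lattice point $(n,m)$ with $n<(6-\delta)m-c'(\delta)$ I would set $k=\lceil n/2\rceil\ge 1$ and apply the geography at $(x,y)=(m,\,n+k)$. The same chain of inequalities as in the proof of Theorem \ref{main-B} shows $n+k\le \frac{3n}{2}+1<(9-\epsilon)m-c(\epsilon)$, so the point lies in the admissible region and yields symplectic manifolds $W_i$ with $c_1^2(W_i)=n+k$, $\chi_h(W_i)=m$, $\pi_1(W_i)=G$. I then define $Z_i:=W_i\#k\cpb$. Being iterated symplectic blow-ups, the $Z_i$ are symplectic, and the $\cpb$ summands force the intersection form to be odd, so they are non-spin; moreover $\pi_1(Z_i)=G$, $c_1^2(Z_i)=(n+k)-k=n$ and $\chi_h(Z_i)=m$. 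They remain mutually homeomorphic (connected sum of homeomorphic manifolds with a fixed one) and pairwise non-diffeomorphic, the latter because the blow-up formula for Seiberg--Witten invariants \cite{k-m-t} keeps their basic classes distinct.

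It then remains to verify the analytic and topological conclusions. For the non-existence I would apply Theorem \ref{ricci-ob-3}(2) with $X=W_i$ and $N=k\cpb$: here $b^+(N)=0$, $N$ is visibly not an integral homology sphere, and $b^+(W_i)\ge 2$ with $SW_{W_i}\ne 0$ by Taubes, while the choice $k=\lceil n/2\rceil$ gives $3b^-(N)=3k\ge n+k=2\chi(W_i)+3\tau(W_i)$, which is exactly the hypothesis (\ref{ob-N-Ricci-2}); hence $Z_i$ admits no quasi-non-singular, and in particular no non-singular, solution of the normalized Ricci flow. Since each $Z_i$ is symplectic with $b^+\ge 2$ it carries a nontrivial Seiberg--Witten invariant, so $\mathcal{Y}(Z_i)<0$ by LeBrun \cite{L-1}. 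Finally, from $c_1^2(Z_i)=2\chi+3\tau=n>0$ and $c_2(Z_i)=12m-n$ one computes $2\chi(Z_i)-3\tau(Z_i)=48m-5n>0$ (using $n<6m$), so $2\chi(Z_i)>3|\tau(Z_i)|$; this gives the strict generalized Hitchin--Thorpe inequality and completes the argument.
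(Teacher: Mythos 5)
Your overall reduction is the right one, and it matches the paper's skeleton: realize a suitable lattice point by a geography result, blow up $k$ times with $k$ chosen so that $3k\geq 2\chi+3\tau$ of the minimal model, and invoke Theorem \ref{ricci-ob-3} (equivalently Corollary \ref{cor}) together with Taubes and LeBrun for the Ricci flow, Yamabe, and Hitchin--Thorpe conclusions; your arithmetic ($k=\lceil n/2\rceil$, the chain $\frac{3n}{2}+1<(9-\epsilon)m-c(\epsilon)$, and $2\chi-3\tau=48m-5n>0$) is correct. But there is a genuine gap at exactly the point you flag and then proceed past: the ``fundamental-group version of Proposition \ref{geogr-sympl}'' is not available as a citation, and your sketch of it does not work as stated. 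For arbitrary finitely presented $G$ there is no Freedman-type homeomorphism classification, so the assertion that surgery along a nullhomotopic square-zero torus ``preserves the homeomorphism type'' is precisely what cannot be deduced from invariants in the non-simply-connected setting; it has to be \emph{constructed}. The paper does this by an explicit gluing scheme: it forms
$$N=X_G~\#_{T^2}~E(4)~\#_{\Sigma_2}~X~\#_{\Sigma_2}~E(4)~\#_{T^2}~E(2),$$
where $X_G$ is Gompf's symplectic manifold with $\pi_1=G$ \cite{g}, $X$ is a Braungardt--Kotschick manifold carrying a genus-$2$ symplectic surface, and the two copies of $E(4)$ serve as linking blocks whose fiber/surface complements are simply connected (this is also what makes the Seifert--Van Kampen computation give $\pi_1=G$). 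The infinitely many smooth structures come from logarithmic transforms of the same parity on the $E(2)$ factor, distinguished by the gluing formula for Seiberg--Witten invariants \cite{SW-glue}; homeomorphism is then arranged by performing the logarithmic transforms on the simply connected block $E(2)$ \emph{before} fiber summing, so that the homeomorphisms are produced at the level of the pieces rather than inferred from numerical invariants. Your proposal replaces this entire mechanism with an unproven black box, which is the main content of the theorem.

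A secondary, related error: your constant $c'(\delta)=\frac{2}{3}\bigl(c(\epsilon)+1\bigr)$ is copied from Theorem \ref{main-B} and is independent of $G$, but building $\pi_1=G$ into the construction costs topology. In the paper the admissible region is shifted by the $G$-dependent amount coming from the Gompf block and the linking pieces, yielding $c'(\delta)=\frac{2}{3}c(\tfrac{3}{2}\delta)+(6-\delta)(\chi_h(X_G)+12)-11$. Since $G$ is fixed before $c'(\delta)$ is chosen, a $G$-dependent constant is permitted by the statement --- but any honest construction realizing $\pi_1=G$ will force such dependence, so your $G$-free formula signals that the missing geography input, if it were provable at all, could not have the uniform constant you assigned it. To repair the proposal you would need to prove your key input along the lines above (fiber sum with $X_G$ through simply connected linking blocks, vary the smooth structure inside a simply connected summand, distinguish by a gluing formula), at which point your argument converges to the paper's.
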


\begin{proof}
The proof is again based on the obstruction given in Corollary \ref{cor}. For constructing our examples we employ Gompf's techniques \cite{g} and use symplectic connected sum along symplectic sub-manifolds with trivial normal bundle. We need some standard blocks of symplectic manifolds.

The first block we denote by $X_G$ and it is the spin symplectic $4-$manifolds constructed by Gompf (see \cite{g} Theorem 6.2.). $X_G$ has fundamental group $G$, $c_1^2(X_G)=0$ and contains, as a symplectic submanifold, a $2-$torus of self-intersection $0.$ 

The second block, denoted by $E(n),$ is the family of simply connected proper elliptic complex surfaces, with no multiple fibers and Euler characteristic $c_2(E(n))=12n$. A generic fiber is a symplectic torus of self-intersection $0$ and it is a known fact that its complement is simply connected.

The third block is one of the symplectic manifolds constructed by Braungardt  and Kotschick, which has $\chi_h(X)=m.$ These manifolds are obtained as symplectic sums of simple symplectic manifolds along Riemann surfaces with trivial normal bundles. For example, one of of these manifolds, $S_{1,1}$ (example 2 in \cite{b-k}), has such  a symplectic sub-manifold of genus $2.$ Hence the manifold $X$ has a sympletic sub-manifold which is a Riemann surface of genus $2$ and has trivial canonical line bundle, which we denote by $R.$

%The third block is the one used by Gompf \cite{g} in the proof
% of Theorem $6.1$. Let $F_1,F_2$ be two Riemann surfaces of genera
%  $k+1~(k\geq1)$ and $2$, respectively. Let $C_i,~i=1,\dots,2k+2$ be
%  homologically
% nontrivial embedded circles in $F_1$ generating $H_1(F_1,\ZZ)$ and
% the circles $C'_i \subset F_2 ,~i=1,\dots,4$ be the generators of
% $H_1(F_2,\ZZ),$ such that $C_{2i}\cap C_{2j}=C_{2i-1}\cap C_{2j-1}=\emptyset$ if $i\neq j$ and $C_{2i}\cdot C_{2j-1}=\delta_i^j,~i,j=\overline{1,k+1}.$ $C'_i$ are also satisfying the above conditions.
%Let $T_i$ be the collection of tori given by
% $C_1\times C'_1,C_2\times C'_3, C_3\times C'_2,C_4\times
% C'_4,C_i\times C'_1,~i=5,\dots, 2k+2.$ We can perturb this
% collection to a new collection of {\em disjoint} tori $\{T'_i\}$,
% where $T'_i$ is homologous to $T_i$.
%As $T_i \subset F_1\times F_2$ is a Lagrangian torus we may
% choose $T'_i \subset F_1\times F_2$ to be Lagrangian, too. Moreover these
% tori are also homologically non-trivial, hence we can perturb the product symplectic form on $F_1\times F_2$, see \cite{g}, such that these tori become symplectic
% submanifolds. Let $X_k$ be the manifold obtained by performing
% symplectic connected sum of $F_1\times F_2$ and $2k+2$ copies of
% $E(2)$ along the family $\{T'_i\}_{i=\overline{1,2k+2}}$ and
% generic fibers of $E(2)$.
%Then the manifold $X_k$ is a spin, symplectic $4$-manifold, and by
% Seifert-Van Kampen Theorem it is also simply connected.
%The numerical invariants of $X_k$ are $\chi(X_k)=52k+48,~ \tau(X_k)=-32(k+1).$

The fourth block has a linking role. It is $E(4)$ with a special
 symplectic structure. This manifold has an important feature
 \cite[proof of Theorem $6.2$]{g}: it
 contains a torus and a genus $2$ Riemann surface as disjoint
 symplectic submanifolds. We denote them by $T$ and $F$
 respectively. Both $T$ and $F$ have self-intersection zero 
 and their complement $E(4)\setminus (F\cup T)$ is simply
 connected.

We are now ready to construct our symplectic manifolds. Let:
$$N=X_G~ ~\#_{T^2}~ E(4)~\#_{\Sigma_2}~X ~\#_{\Sigma_2}~E(4)~\#_{T^2}~E(2),$$
$$Z=N\#p\cpb,$$
where $\#_{T^2}$'s are the symplectic sums along tori of
 self-intersection zero and $\#_{\Sigma_2}$'s are fiber sums along Riemann surfaces of genus $2,$ represented by $F \subset E(4)$ and a copy of $R \subset X$.
  The last operation is simply a connected sum where
  $p=[\frac 13 (c_1^2(N)+2)]~>0.$

Gompf showed \cite{g} that the numerical invariants of a manifold obtained from symplectic sumation along a Riemann surface $\Sigma$ can be computed as
$$c_1^2(M\#_{\Sigma}M')=c_1^2(M)+c_1^2(M')-4\chi(\Sigma)$$
$$\chi_h(M\#_{\Sigma}M')=\chi_h(M)+\chi_h(M')-\frac12 \chi(\Sigma).$$
Hence the numerical invariants of $N$ are $c_1^2(N)=16+c_1^2(X), ~\chi_h(N)=12+\chi_h(X_G)+\chi
_h(X).$

The fundamental group of $Z$ can be easily computed by
 Seifert-Van Kampen Theorem to be $G$.

To obtain distinct differential structures on $Z$, we
 take logarithmic transformations of different multiplicities of the same parity
 along a generic fiber of $E(2)$. Then by the gluing formula for the
 Seiberg-Witten invariants \cite{SW-glue} (Cor 15,20) the
 manifolds have different Seiberg-Witten invariants. Hence we have
 constructed infinitely many non-diffeomorphic manifolds. We
 denote them by $Z_i$. 
 
%  In particular $M_{i,j}=X_G~ ~\#_{T^2}~ E(4)~\#_F~X_i ~\#_F~E(4)~\#_{T^2}~Y_j\#p \cpb$.

 These manifolds are
 all homeomorphic. To show this, we can first do the
 logarithmic transformations on $E(2)$, this yields
 homeomorphic manifolds. By taking the fiber sum along a
 generic fiber with the remaining terms we
 obtain homeomorphic manifolds.

Corollary \ref{cor} implies that there are no non-singular solutions of the normalized Ricci flow on any $Z_i.$ 

The last thing that we need to verify is to find the possible pairs $(c_1^2(Z),\chi_h(Z))$ in the integer lattice.
We have that
$$c_1^2(Z)=c_1^2(N)-p=c_1^2(X)+16-[\frac 13 (c_1^2(X)+18)]=c_1^2(X)+10-[\frac 13(c_1^2(X)]$$
But  the manifold $X$ is obtained by the construction in Proposition \ref{geogr-sympl} so

\begin{align}
c_1^2(Z)& <  \frac23 c_1^2(X)+1+10\notag\\
 &<\frac 23 ((9-\eps)\chi_h(X)-c(\eps))+11\notag\\
 &=(6-\frac 23 \eps)(\chi_h(N)-\chi_h(X_G)-12)-\frac23c(\eps)+11 \notag\\
 &=(6-\frac 23 \eps)\chi_h(Z)-c'(\frac23\eps)\notag
\end{align}
where $c'(\frac23\eps)=\frac23c(\eps)+(6-\frac 23\eps)(\chi_h(X_G)+12)-11.$

As the manifolds $X$ constructed in Proposition \ref{geogr-sympl} fill in the region $0<n<(9-\eps)m-c(\eps),$ we can find a corresponding region for the the topological invariants of $Z.$
Hence for any $\delta=\frac 23\eps>0$ there exists a constant $c'(\delta)=\frac23 c(\frac 32\delta)+(6-\delta)(\chi_h(X_G)+12)-11$ such that any positive integer lattice point $(n,m)$ in the region $n<(6-\delta)m-c'(\delta)$ 
is obtained as $(c_1^2(Z),\chi_h(Z))$ of a manifold $Z$ as above.
\end{proof}

For spin manifolds the obstructions a little bit more restrictive, but we have a similar statement:
\begin{main}\label{main-spin}
For any finite group $G$, there is an infinite family of spin $4-$manifolds $X_i$ with fundamental group $\pi_1(X_i)=G$ such that each manifold  has infinitely many, distinct smooth structures for which the Yamabe invariant is negative and there is no non-singular solution to the normalized Ricci flow. All these manifolds  satisfy the strict generalized Hitchin-Thorpe Inequality.
\end{main}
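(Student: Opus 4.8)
The plan is to obtain the non-existence of non-singular solutions from the connected-sum obstruction of Theorem~\ref{ricci-ob-2} rather than from Corollary~\ref{cor}, because the spin hypothesis together with finiteness of $G$ closes off the routes used for Theorems~\ref{main-A}--\ref{non-spin+arb}. Indeed, were a spin $M$ with finite $\pi_1(M)=G$ to split smoothly as $X\#N$ with $b^+(N)=0$, then $N$ would be spin with negative-definite intersection form, hence $b^-(N)=0$ by Donaldson's theorem \cite{don}, while $b_1(N)=0$ since $G$ is finite; the inequality in Theorem~\ref{ricci-ob-3} would degenerate to $0\geq 2\chi(X)+3\tau(X)$, which carries no information, and there is no room for a $\cpb$-summand as in the non-spin arguments. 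I would therefore build each $X_i$ so that it splits smoothly into four almost-complex spin pieces with odd Seiberg-Witten invariants and exploit the extra $12(j-1)$ term available for $j=4$.

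Concretely, for the given finite $G$ I would first produce a spin symplectic manifold $A$ with $\pi_1(A)=G$, odd Seiberg-Witten invariant, $b^+(A)\equiv 3\ (\mathrm{mod}\ 8)$, and $12<c_1^2(A)\leq 36$. Starting from Gompf's spin symplectic manifold $X_G$ \cite{g}, which has $\pi_1=G$, $c_1^2=0$, and a symplectic torus of square $0$ with simply connected complement, I would push $c_1^2$ into the window $(12,36]$ and correct $b^+\ \mathrm{mod}\ 8$ by fiber-summing along that torus with simply connected spin symplectic models of prescribed Chern numbers, using Gompf's gluing formulae and Seifert-van Kampen to keep $\pi_1=G$. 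I then set $W=A\ \#\ K3\ \#\ K3\ \#\ K3$, which is spin with $\pi_1(W)=G$. Each of the four summands has $b^+\equiv 3\ (\mathrm{mod}\ 4)$ and odd Seiberg-Witten invariant (Taubes), and $b^+(W)=b^+(A)+9\equiv 4\ (\mathrm{mod}\ 8)$; since $c_1^2(K3)=0$, every partial sum $\sum_{i\leq j}(2\chi+3\tau)$ equals $c_1^2(A)>0$, and $\sum_{i=1}^4(2\chi+3\tau)=c_1^2(A)\leq 36=12(4-1)$. Thus Theorem~\ref{ricci-ob-2} with $j=4$ and $N=S^4$ rules out quasi-non-singular, hence non-singular, solutions on $W$. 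A one-line computation gives $(2\chi+3\tau)(W)=c_1^2(A)-12$, and since $\tau(W)<0$ this equals $2\chi(W)-3|\tau(W)|$; so the choice $c_1^2(A)>12$ is exactly what forces the strict generalized Hitchin-Thorpe inequality. Finally, the monopole class supplied by the non-vanishing stable-cohomotopy invariant underlying Theorem~\ref{ricci-ob-2} forces $\yy(W)<0$ via LeBrun's estimates \cite{L-1, ism}.

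To realize infinitely many smooth structures I would fix $A$ and two of the $K3$'s and vary only the last summand by Fintushel-Stern knot surgery along a cusp torus, replacing it by $(K3)_{K_\ell}$ for an infinite family of knots $K_\ell$ with distinct Alexander polynomials. Since knot surgery preserves the homeomorphism type, all the manifolds $W_\ell=A\#(K3)_{K_\ell}\#K3\#K3$ are mutually homeomorphic; each $(K3)_{K_\ell}$ is simply connected with even form, hence spin, has $b^+=3$, and retains an odd Seiberg-Witten invariant (the leading Alexander coefficient is a unit), so every $W_\ell$ is spin with $\pi_1=G$, still satisfies the hypotheses of Theorem~\ref{ricci-ob-2}, and therefore admits no non-singular solution while obeying strict Hitchin-Thorpe and $\yy<0$. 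The hard part, and the step I expect to be the genuine obstacle, is proving the $W_\ell$ pairwise non-diffeomorphic: the ordinary numerical Seiberg-Witten invariant of a connected sum of pieces with $b^+>0$ vanishes identically, so the separation must be read off the finer stable-cohomotopy (Bauer-Furuta) invariant, in the spirit of the first author's techniques \cite{ism}. Here I would invoke Bauer's product formula together with a support/family refinement showing that the structure of the stable-cohomotopy invariant of $W_\ell$ is sensitive to $\Delta_{K_\ell}$, and I would also have to rule out that summing with $K3$'s dissolves the exotic structures (as summing with $S^2\times S^2$ or $\cpb$ would); controlling this refined invariant across infinitely many $\ell$ is where the real work lies.
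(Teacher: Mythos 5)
Your use of Theorem~\ref{ricci-ob-2} is the right idea and is in fact the paper's own route: the paper takes $M_{i,j}=X_0\,\#\,N_G(i)\,\#\,T_j$, a three-fold sum ($j=3$, $N=S^4$), where $X_0$ is Gompf's small spin manifold with $c_1^2=16$, $N_G(i)$ is $X_G$ fiber-summed with $E(2i)$ (so $\pi_1=G$, $c_1^2=0$, $b^+\equiv 3 \bmod 4$), and $T_j$ is $E(2)$ with an odd-order logarithmic transform; the obstruction inequality reads $12\cdot 2=24\geq 16$, and the strict generalized Hitchin--Thorpe inequality and $\yy<0$ follow just as you argue. Your four-fold variant $W=A\,\#\,K3\,\#\,K3\,\#\,K3$ with $b^+(A)\equiv 3\bmod 8$ and $12<c_1^2(A)\leq 36$ checks out against the hypotheses of Theorem~\ref{ricci-ob-2}, so the non-existence, Hitchin--Thorpe, and Yamabe parts of your argument are fine.

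The genuine gap is exactly where you flag it: you never prove the $W_\ell$ are pairwise non-diffeomorphic, and the tool you reach for --- a ``support/family refinement'' of the Bauer--Furuta invariant sensitive to the full Alexander polynomial $\Delta_{K_\ell}$ --- does not exist in the literature and is not supplied by \cite{ism} or \cite{b-f, b-1}. Two concrete problems: first, for a non-fibered knot the spin${}^c$ structure of $(K3)_{K_\ell}$ with top basic class need not have odd Seiberg--Witten invariant (you need $\Delta_{K_\ell}$ monic, e.g.\ fibered knots), so the hypotheses of Theorem~\ref{ricci-ob-2} can fail for your summands; second, after connected sum only the Bauer--Furuta \emph{basic classes} survive, and ``distinct Alexander polynomials'' with the same basic-class data are invisible to them, so your family as described would not be separated. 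The paper's missing idea is elementary and avoids any refinement: by varying the multiplicity of the log transform, $c_1(T_j)=2jf$, both $a=c_1(X_0)+c_1(N_G(i))+c_1(T_j)$ and $b=c_1(X_0)+c_1(N_G(i))-c_1(T_j)$ are Bauer--Furuta basic classes, so $a-b$ is divisible by $4j$; since a fixed smooth manifold has only finitely many basic classes and these are diffeomorphism invariants, letting $j\to\infty$ yields infinitely many smooth structures. Your construction could be repaired in the same spirit --- replace the $K_\ell$ by fibered knots of strictly increasing genus (or by log transforms), so that the divisibility of the difference of basic classes grows --- but as written the distinguishing step, which is the heart of the theorem, is open.
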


\begin{proof}
The manifolds $M_i$ are constructed as connected sums and fiber sums of different blocks.

For the first block we start with $X_G$, the spin symplectic $4-$manifolds constructed by Gompf.
Taking the symplectic connected sum with the elliptic surface $E(2n)$ along arbitrary fibers ($\{pt\}\times T^2\cong F_0\subset E(2n)$) we obtain a new symplectic, spin manifold which we denote by $N_G(n)$. As the complement of the generic fiber $F_0\subset E(2n)$ is simply connected, the manifold $N_G(n)$ satisfies the following: $\pi_1(N_G(n))=G$,  and its topological invariants can be computed to be $c_1^2(N_G(n))=0,~c_2(N_G(n))=24k+24n.$ Since $G$ is finite, $b_1(N_G(n))=0$, and $b^+(N_G(n))=4(k+n)-1\equiv 3\mod 4.$

The second block is obtained from $E(2)$
 by performing a logarithmic transformation of order $2n+1$ on one
 non-singular elliptic fiber. We denote the new manifolds by $T_n$. All $T_n$ are simply connected spin manifolds
 with $b^+=3$ and $b^-=19$, hence they are all homeomorphic. Moreover,
 $T_n$ are K{\"a}hler manifolds and $c_1(T_n)=2n f$, where $ f$ is the
 multiple fiber introduced by the logarithmic transformation. Hence $\pm 2nf$ is a basic class, and its Seiberg-Witten invariant is $\pm1$.

The third block is a "small" spin manifold $X_0$ that was used by Gompf \cite{g} in the proof of Theorem 6.2 (see also the explicit construction in the proof of Theorem 1.6 in \cite{sio}). Its invariants are $c_2(X_0)=152, ~\tau(X_0)=-96,~ c_1^2(X_0)=16,~\mathrm{and}~b^+=27~(\equiv 3 \bmod4).$

We may now define our manifolds:
$$M_{i,j}=X_0~ \#~ N_G(i)~\#~ T_j.$$

For fixed $i$, the manifolds $M_{i,j}$ are all homeomorphic 
as we take connected sums of homeomorphic manifolds.
 We denote this homeomorphism type by $M_i$.

If we consider the basic classes of the Bauer-Furuta invariant,
 then both $a=c_1(X_0)+c_1( N_G(i) )+c_1(T_j)$ and
 $b=c_1(X_0)+c_1( N_G(i) )-c_1(T_j)$ are basic classes, and then
 $4j ~| ~(a-b).$ But any manifold has a finite number of basic classes which are a diffeomorphism invariant. As we let $j$ take infinitely many values, this will imply that $M_{i,j}$ represent infinitely many types of diffeomorphism classes.
 
By Theorem \ref{ricci-ob-2} these manifolds do not have non-singular solutions of the normalized Ricci flow, but they satisfy the strict generalized Hitchin-Thorpe Inequality.
\end{proof}

\begin{main}\label{non-ex-spin}
There exists an $n_0>0$ such that for any $d>n_0$ the following $4-$manifolds:
\begin{enumerate}
\item $X_{1,n}=d(n+5)K3 \# (d(n+7)-1)(S^2\times S^2)$,  
\item $X_{2,n}=d(2n+5)K3 \#  (d(2n+6)-1) (S^2\times S^2)$. 
\end{enumerate}
where $n$ is a positive integer, admit infinitely many non-equivalent free $\ZZ_{d}$-actions, such that there is no $\ZZ_d-$equivariant non-singular solution to the normalized Ricci flow on $X_{1,n},X_{2,n}$ for any of the $\ZZ_d$ actions.
\end{main}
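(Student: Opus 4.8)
The plan is to construct the manifolds $X_{1,n}$ and $X_{2,n}$ as universal covers of quotient manifolds built from blocks with controlled Seiberg-Witten theory, exactly in the spirit of the proofs of Theorems \ref{main-A} and \ref{main-B}. First I would produce a spin symplectic ``downstairs'' manifold $W_n$ with fundamental group $\ZZ_d$ whose universal cover is diffeomorphic to $X_{1,n}$ (respectively $X_{2,n}$). The natural building blocks are the spin elliptic surfaces $E(2k)$, whose universal-cover decompositions into $K3$'s and $S^2\times S^2$'s are standard, together with Ue's spin rational homology sphere $Y_d$ with $\pi_1(Y_d)=\ZZ_d$ and universal cover $\#(d-1)(S^2\times S^2)$, mentioned after Corollary \ref{cor}. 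I would take $W_n$ to be a connected sum of the form $E(2k)\#Y_d\#(\text{copies of }S^2\times S^2)$ with $k$ chosen so that, after passing to the $d$-fold cover and using $\widetilde{Y_d}=\#(d-1)(S^2\times S^2)$ together with $K3\cong E(2)$, the universal cover simplifies to the prescribed $d(n+5)K3\#(d(n+7)-1)(S^2\times S^2)$ form. The two cases correspond to two different choices of the elliptic block $E(2(n+5))$ versus $E(2(2n+5))$; matching $b^+$, $b^-$, and the parity of the intersection form is a routine Euler-characteristic and signature bookkeeping that fixes the number of $S^2\times S^2$ summands.

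Once $W_n$ is in hand, the non-existence of non-singular solutions downstairs is supplied directly by Corollary \ref{cor}: the symplectic piece $X=E(2k)$ (or its fiber-summed enhancement) has $b^+\geq 2$ and nontrivial Seiberg-Witten invariant, $Y_d$ is a rational homology $4$-sphere with $\pi_1=\ZZ_d$, and the number of $\cpb$ summands — or more precisely the relevant combination $12b_1(N)+3b^-(N)$ — can be arranged to exceed $\frac13(2\chi(X)+3\tau(X))$ once $d>n_0$. The threshold $n_0$ arises exactly from this inequality: for the spin blocks we must guarantee that the $(d-1)(S^2\times S^2)$ contributed by $\widetilde{Y_d}$ (which after quotienting shows up as the required $b^-$) dominates $2\chi+3\tau$ of the elliptic surface, and this forces $d$ large relative to $n$. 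To get infinitely many distinct smooth structures I would perform logarithmic transformations of distinct odd multiplicities along a generic fiber of the elliptic block, as in the proof of Theorem \ref{non-spin+arb}; the gluing formula for Seiberg-Witten invariants \cite{SW-glue} distinguishes them, while Theorem \ref{ha-kr} (or direct handle arguments for the simply connected covers) shows they are all homeomorphic, yielding the infinitely many inequivalent free $\ZZ_d$-actions after passing to covers.

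The passage from downstairs non-existence to the equivariant statement upstairs is the same averaging principle used throughout: a $\ZZ_d$-equivariant non-singular solution on $X_{i,n}$ would descend to a non-singular solution on the quotient $W_n$ (the curvature bounds are preserved under the free isometric quotient), contradicting Corollary \ref{cor}. I would spell this out by noting that $G$-invariance of the metrics means $\pi_*g(t)$ is a well-defined smooth family on $W_n$ solving the normalized Ricci flow, with $\sup|Rm|$ unchanged since $\pi$ is a local isometry. The strict generalized Hitchin-Thorpe inequality $2\chi>3|\tau|$ for the stated manifolds is a direct numerical check: for $aK3\#b(S^2\times S^2)$ one has $\tau=-16a$ and $\chi=24a+2b+2$, so $2\chi+3\tau=48a+4b+4-48a=4b+4>0$ and $2\chi-3\tau=96a+4b+4>0$, both strictly positive.

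The main obstacle I expect is the exact arithmetic of the decomposition of the universal cover. Getting the coefficients to land precisely on $d(n+5)$ and $d(n+7)-1$ (and the parallel $d(2n+5)$, $d(2n+6)-1$) requires choosing the elliptic multiplicity $k$, the number of auxiliary $S^2\times S^2$ summands in $W_n$, and reconciling them with the $(d-1)(S^2\times S^2)$ from $\widetilde{Y_d}$ and the $K3=E(2)$ convention — all while keeping $W_n$ spin and symplectic with nonzero Seiberg-Witten invariant. In particular one must verify that the connected-sum components inherited from $Y_d$ and the extra $S^2\times S^2$'s do not destroy the $b^+(N)=0$ hypothesis of Corollary \ref{cor} for the $N$-part of the decomposition $W_n=X\#N$; this is where the choice of which summands to bundle into the symplectic $X$ versus the ``error'' piece $N$ must be made carefully, and where the lower bound $n_0$ on $d$ is pinned down.
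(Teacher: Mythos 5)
Your overall architecture (build a quotient with $\pi_1=\ZZ_d$, obstruct the flow downstairs, pass to the universal cover, descend equivariant solutions via the local isometry) matches the paper, and your descent paragraph is correct; but the central obstruction step fails, and it fails exactly at the point you flag in your last paragraph. Corollary \ref{cor} cannot be applied to any spin quotient of the form $E(2k)\#Y_d\#(\mbox{copies of }S^2\times S^2)$: in the spin setting there are no $\cpb$ summands; $Y_d$ is a rational homology sphere, so $b_1(Y_d)=b^-(Y_d)=0$ and the quantity $12b_1(N)+3b^-(N)$ you hope to make large (by taking $d>n_0$) is identically zero, independent of $d$; and the auxiliary $S^2\times S^2$ summands have $b^+=1$, so they can neither be absorbed into the piece $N$ (which must have $b^+(N)=0$) nor into the symplectic piece without killing its Seiberg--Witten invariant, since SW invariants vanish on connected sums in which both summands have $b^+>0$. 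Moreover $2\chi(E(2k))+3\tau(E(2k))=c_1^2(E(2k))=0$, so even Case 1 of Theorem \ref{ricci-ob-3} yields nothing for $N=Y_d$. As it stands your construction has no obstruction mechanism at all. The paper circumvents precisely this by invoking the Bauer--Furuta-based Theorem \ref{ricci-ob-2} instead of Corollary \ref{cor}: the quotient is taken to be $M_{1,n}^j=X\#T_j\#E(2n)\#Y_d$, a connected sum of \emph{three} almost-complex spin blocks with $b^+>0$ and $b^+\equiv 3\bmod 4$ (where $X$ is a smooth hypersurface of tridegree $(4,4,2)$ in $\bcp^1\times\bcp^1\times\bcp^1$ with $c_1^2(X)=16$, supplying $\sum(2\chi+3\tau)=16>0$, which that theorem requires), and the slack term $12(j-1)=24\geq 16$ coming from the number of $b^+>0$ summands is what furnishes non-existence; this term has no analogue in Corollary \ref{cor}. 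Note also that with only elliptic blocks, as in your plan, one would have $\sum(2\chi+3\tau)=0$ and the hypotheses of Theorem \ref{ricci-ob-2} would fail too, so the block $X$ with $c_1^2>0$ is essential.

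Two consequent problems follow. First, distinguishing smooth structures: since ordinary SW invariants vanish on these multi-$b^+>0$ connected sums, your plan to separate log transforms via the gluing formula \cite{SW-glue} cannot work downstairs; worse, by Mandelbaum--Wall decomposability, log-transformed elliptic surfaces become standard after a single $\#(S^2\times S^2)$, so your candidate quotients containing extra $S^2\times S^2$ summands would plausibly all be diffeomorphic, destroying the claim of infinitely many inequivalent actions. The paper instead distinguishes the $M_{1,n}^j$ by basic classes of the Bauer--Furuta stable cohomotopy invariant, as in the proof of Theorem \ref{main-spin}, and keeps all summands with $b^+>0$ free of $S^2\times S^2$ stabilizations. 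Second, the role of $n_0$: you assert it arises from the obstruction inequality forcing $d$ large relative to $n$, but in the paper the obstruction holds for every $d\geq 2$; the constant $n_0$ is Wall's stabilization number for $X$, namely the number of $S^2\times S^2$'s needed so that $X\#n_0(S^2\times S^2)$ becomes diffeomorphic to $4K3\#7(S^2\times S^2)\#n_0(S^2\times S^2)$, and the hypothesis $d>n_0$ only ensures that the $(d-1)$ copies of $S^2\times S^2$ contributed by $\wt{Y_d}$ suffice to decompose the universal cover $dX\#dT_j\#dE(2n)\#(d-1)(S^2\times S^2)$ into the stated model $X_{1,n}$.
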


\begin{proof}
We begin by constructing a simply connected, spin manifolds with small topological invariants and $b^+\equiv3\bmod 4.$ One such manifold is given by a smooth hypersurface of tridegree $(4,4,2)$ in $\bcp^1\times\bcp^1\times\bcp^1,$ which we denote by $X.$ Its numerical invariants can be easily computed to be $c_1^2(X)=16,~ c_2(X)=104,~b^+(X)=19$ and it is simply connected. Then by Freedman's Theorem, $X$ is homeomorphic to $4K3\#7(S^2\times S^2).$ A result of Wall \cite{wall} tells us that there exists an integer $n_0$ such that $X\# n_0(S^2\times S^2)$ becomes diffeomorphic to $4K3\#7(S^2\times S^2)\#n_0(S^2\times S^2).$

Assuming the notation from the previous theorem, let:
$$M_{1,n}^j= X\# T_j\# E(2n)\#Y_d$$
$$M_{2,n}^j= X\#E(2)\# T_j\# E(2(2n-1))\#Y_d$$
By Theorem \ref{ricci-ob-2} the above manifolds do not admit an Einstein metric.
 
The manifolds $\{M_{1,n}^j~|~j\in \NN\}$ are all homeomorphic. However, as argued in the proof of the previous  theorem they represent
 infinitely many differential structures. Moreover, if we consider 
 the universal cover, $\wt{M_{1,n}^j},$ it is diffeomorphic to 
 $dX\# dT_j\# dE(2n)\#(d-1)(S^2\times S^2).$ 
 %Prop 2.19, page43
 But Mandelbaum \cite{man}
  proved that both $T_j$ and $E(2n)$ completely decompose as connected 
  sums of $K3'$s and $S^2\times S^2$'s after taking the connected sum 
  with one copy of $S^2\times S^2.$ Hence, for $d>n_0,$ the manifold
  $ \wt{M_{1,n}^j}$ is diffeomorphic to 
 $d(4K3\#7(S^2\times S^2))\# d(K3)\#d(nK3\#(n-1)
 (S^2\times S^2))\#(d-1)(S^2\times S^2)$ i.e. to
 $d(n+5)K3\#(d(n+7)-1)(S^2\times S^2)=X_{1,n}.$ 
 
Notice that the diffeomorphism type of the universal cover does not depend on $j$. Hence on $X_{1,n}$ we have constructed infinitely many non-equivalent, free actions of $\ZZ_d,$ such that there is no non-singular $\ZZ_d-$equivariant solution of the normalized Ricci flow. But all $M_{1,n}$ satisfy the strict generalized Hitchin-Thorpe Inequality.
 
 Redoing the same arguments for the second example $M_{2,n}^j,$ gives us the results for the second family of manifolds.
\end{proof}

\begin{rmk}
The obstruction and the results in this paper are on manifolds with $b^+\geq 2$. A generalization of this obstructions for $b^+=1$ will be the subject of a new paper \cite{irs}, which is currently in preparation.
\end{rmk}
%%%%%%%%%%%%%%%%%%%%%%%%%%%%

\vfill

{\footnotesize 
\noindent
{Masashi Ishida, \\
{Department of Mathematics,  
Sophia University, \\ 7-1 Kioi-Cho, Chiyoda-Ku, 
 Tokyo 102-8554, Japan }\\
{\sc e-mail}: ishida@mm.sophia.ac.jp}

\vspace{0.1cm}

{\footnotesize 
\noindent
{Ioana {\c S}uvaina, \\
{IHES, 35 route de Chartres, 91440 Bures-sur-Yvette, France}\\
{\sc e-mail}: ioana@ihes.fr}

\end{document}